\documentclass[12pt,reqno]{amsart}
\usepackage[notref,notcite]{}
\usepackage{graphicx}
\usepackage[english]{babel}
\usepackage{amssymb}
\usepackage{amsmath}
\usepackage{amsmath,color}
\usepackage{wrapfig}\usepackage{comment}
\usepackage{hyperref}

\newcommand{\R}{\mathbb{R}}
\newcommand{\D}{\mathbb{D}}
\newcommand{\Z}{\mathbb{Z}}

\begin{document}

\title[Numerical solution of the two-dimensional Calder\'{o}n problem]{Numerical solution of the two-dimensional Calder\'{o}n problem for domains close to a disk}
\author{Vladimir A. Sharafutdinov and Constantin V. Storozhuk}
\address{Sobolev Institute of mathematics. 4 Koptyug av., Novosibirsk, 630090, Russia}
\email{sharafut@list.ru}
\address{Sobolev Institute of mathematics. 4 Koptyug av., Novosibirsk, 630090, Russia}
\email{stork@math.nsc.ru}
\thanks{The work of the first author was performed according to the Government research assignment for IM SB RAS, project FWNF--2026-0026.}

\setcounter{section}{0}
\setcounter{page}{1}
\newtheorem{theorem}{Theorem}[section]
\newtheorem{lemma}[theorem]{Lemma}
\newtheorem{problem}[theorem]{Problem}
\newtheorem{proposition}[theorem]{Proposition}
\newtheorem{corollary}[theorem]{Corollary}
\newtheorem{conjecture}[theorem]{Conjecture}
\newtheorem{definition}[theorem]{Definition}
\newtheorem{remark}[theorem]{Remark}
\numberwithin{equation}{section}
\setlength{\arraycolsep}{3pt}

\maketitle

\begin{abstract}
For a compact Riemannian surface
$(M,g)$ with non-empty boundary $\Gamma$, the
Diri\-chl\-et-to-Neumann operator (DtN-map)
$\Lambda_g:C^\infty(\Gamma)\to C^\infty(\Gamma)$ is defined by
$\Lambda_gf=\left.\frac{\partial u}{\partial\nu}\right|_\Gamma$,
where $\nu$ is the unit outer normal vector to the boundary and $u$
is the solution to the Dirichlet problem $\Delta_gu=0,\
u|_\Gamma=f$. The Calder\'{o}n problem consists of recovering a
Riemannian surface from its DtN-map. It is well known that $(M,g)$
is determined by $\Lambda_g$ uniquely up to a conformal equivalence.
We suggest a method for numerical solution of the Calder\'{o}n
problem. The method works well at least for Riemannian surfaces
$(M,g)$ close to $({\D},e)$, where ${\D}=\{(x,y)\mid x^2+y^2\le1\}$
is the unit disk and $e=dx^2+dy^2$ is the Euclidean metric. Our
numerical examples confirm the statement: the DtN-map is very
sensitive to small deviations of the shape of a domain.
\end{abstract}

\section{Introduction}

A connected two-dimensional Riemannian manifold $(M,g)$ with
non-empty boundary $\Gamma=\partial M$ will be called {\it a
Riemannian surface}.
We involve the requirement of smoothness of
$M,\ \Gamma$ and $g$ into the definition of a Riemannian surface;
the term ``smooth'' is used as a synonym of ``$C^\infty$-smooth''.
The Laplace -- Beltrami operator $\Delta_g:C^\infty(M)\to
C^\infty(M)$ is defined by
$\Delta_g=g^{ij}\nabla_{\!i}\nabla_{\!j}$, where
$(g^{ij})=(g_{ij})^{-1}$.

For a compact Riemannian surface, {\it the Dirichlet-to-Neumann
operator} (DtN-map)
\begin{equation}
\Lambda_g:C^\infty(\Gamma)\longrightarrow C^\infty(\Gamma)
                                \label{1.1}
\end{equation}
is defined by
$$
\Lambda_gf=\left.\frac{\partial u}{\partial\nu}\right|_{\Gamma},
$$
where $\nu$ is the unit outer normal vector to the boundary and $u$ is the solution to the Dirichlet problem
$$
\Delta_gu=0\quad\mbox{in}\quad M,\quad u|_{\Gamma}=f.
$$

As well known, $\Lambda_g$ is a first order pseudidifferential operator. Besides that, $\Lambda_g$ is a non-negative self-adjoint operator
with respect to the $L^2$-product
$$
(u,v)_{L^2(\Gamma)}=\int_\Gamma u\overline v\,ds_g\quad(u,v\in C^\infty(\Gamma)),
$$
where $ds_g$ is the arc length of the curve $\Gamma$ in the metric $g$.
The one-dimensional kernel of $\Lambda_g$ consists of constant functions while the range of $\Lambda_g$ coincides with the space
\begin{equation}
C_0^\infty(\Gamma)=\Big\{f\in C^\infty(\Gamma) \mid \int_\Gamma f\,ds_g=0\Big\}
                                \label{1.2}
\end{equation}
of functions with zero mean value.

We now pose the inverse problem. The one-dimensional Riemannian manifold
$(\Gamma,ds_g)$ and operator $\Lambda_g$ are assumed to be known. One has to recover $(M,g)$ from the data
$(\Gamma,ds_g,\Lambda_g)$.
This problem is named {\it the geometric problem of electric impedance tomography} in \cite{Sr2} and {\it the Calder\'{o}n problem} in \cite{Be}.
We use the second shorter name in the present paper.

The condition on connectedness of $M$ is necessary in the
Calder\'{o}n problem. Indeed, otherwise one of connected components
of $M$ can be a manifold with no boundary. The data
$(\Gamma,ds_g,\Lambda_g)$ contain no information about such a
component.

The following non-uniqueness in the Calder\'{o}n problem is obvious.

If $\varphi:M\rightarrow M$ is a diffeomorphism of $M$ onto itself
fixing the boundary, $\varphi|_{\partial M}=Id$, then the metric
$g'=\varphi^*g$ satisfies $ds_{g'}=ds_g$ and
$\Lambda_{g'}=\Lambda_g$. The equality $g'=\varphi^*g$ means that
$\langle v,w\rangle_{g'}=\langle
(d_p\varphi)v,(d_p\varphi)w\rangle_{g}$ for any point $p\in M$ and
any vectors $v$ and $w$ belonging to the tangent space $T_pM$.
Hereafter $\langle \cdot,\cdot\rangle_{g}$ stands for the scalar
product of tangent vectors with respect to the metric $g$ and
$d_p\varphi:T_pM\rightarrow T_{\varphi(p)}M$ is the differential of
$\varphi$ at $p$. Observe that $\varphi:(M,g')\rightarrow (M,g)$ is
an isometry of Riemannian manifolds, therefore the non-uniqueness is
clear from the geometric viewpoint.

The Calder\'{o}n problem possesses one more non-uniqueness. The Laplace -- Beltrami operator on a two-dimensional Riemannian manifold is conformally
invariant in the following sense: $\Delta_{\rho g}=\rho^{-1}\Delta_g$ for a positive function
$\rho\in C^\infty(M)$. If the function satisfies the boundary condition $\rho|_\Gamma=1$, then $\Lambda_{\rho g}=\Lambda_g$.

For a smooth map $\varphi:N\rightarrow N'$ between two manifolds, we
define $\varphi^*:C^\infty(N')\rightarrow C^\infty(N)$ by $\varphi^*
u=u\circ\varphi$. Two ambiguities mentioned above exhaust the
non-uniqueness in the Calder\'{o}n problem.

\begin{theorem} \label{Th1.1}
Let $(M_j,g_j)\ (j=1,2)$ be two compact Riemannian surfaces with boundaries $\Gamma_j$ and let
$\varphi:(\Gamma_1,ds_{g_1})\rightarrow(\Gamma_2,ds_{g_2})$ be an isometry preserving the DtN-map, i.e., such that the following diagram is commutative:
$$
\begin{array}{ccc}
C^\infty(\Gamma_1)&\stackrel{\varphi^*}\longleftarrow&C^\infty(\Gamma_2)\\
\Lambda_{g_1}\downarrow&&\downarrow\Lambda_{g_2}\\
C^\infty(\Gamma_1)&\stackrel{\varphi^*}\longleftarrow&C^\infty(\Gamma_2).
\end{array}
$$
Then $\varphi$ extends to a diffeomorphism $\psi:M_1\rightarrow M_2$ such that $\psi|_{\Gamma_1}=\varphi$ and $\psi^*g_2=\rho g_1$ for some
function $0<\rho\in C^\infty(M_1)$ satisfying $\rho|_{\Gamma_1}=1$.
\end{theorem}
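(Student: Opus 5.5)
This is the Lassas--Uhlmann / Belishev type uniqueness theorem. I would prove it with Belishev's algebraic approach: recover the algebra of holomorphic functions from the DtN-map, then apply a Gelfand-type argument. For orientable $M$, fix a complex structure compatible with $g$ and the orientation. By conformal invariance, this complex structure is all that $\Lambda_g$ can see.

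\textbf{Step 1 (boundary traces of holomorphic functions).} Let $\partial_s$ denote differentiation with respect to arc length along $\Gamma$. Any $f\in C^\infty(\Gamma)$ extends harmonically to $u$. The harmonic conjugate $v$ of $u$ exists when the period of $*du$ vanishes over all cycles. The Cauchy--Riemann equations give $\partial_s v=\Lambda_g f$ on $\Gamma$. Hence the trace $f+i h$ of a holomorphic function $w=u+iv$ is characterized by two conditions:
\begin{itemize}
\item $\partial_s h=\Lambda_g f$;
\item the extensions of $f$ and $h$ are harmonic conjugates.
\end{itemize}
The second condition is expressed through boundary data by $\partial_s f=-\Lambda_g h$. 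So the set
$$
\mathcal A_j=\{\eta=f+ih:\ \partial_s h=\Lambda_{g_j}f,\ \partial_s f=-\Lambda_{g_j}h\}
$$
is exactly the set of traces of functions holomorphic in $M_j$ and smooth up to the boundary. By the maximum principle, restriction to $\Gamma_j$ is an isomorphism of algebras. Since $\varphi$ is an isometry, it intertwines $\partial_s$ (after fixing orientations compatibly, possibly reversing the complex structure on one side). It also intertwines $\Lambda$. Therefore $\varphi^*\mathcal A_2=\mathcal A_1$, and $\varphi^*$ is an algebra isomorphism $\mathcal H(M_2)\to\mathcal H(M_1)$.

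\textbf{Step 2 (Gelfand spectrum).} Take the closure of $\mathcal H(M)$ in the sup-norm on $\Gamma$ and form the Banach algebra. The key claim is that its maximal ideal space is homeomorphic to $M$ via evaluation $p\mapsto\delta_p$. This claim is the main obstacle. Evaluations are characters, and distinct points are separated by holomorphic functions; separation uses the existence of enough holomorphic functions on a bordered Riemann surface, e.g. by embedding into a slightly larger open surface, which is Stein. The hard part is showing there are no other characters. I would prove it first: for a character $\chi$, consider $w-\chi(w)$. If this function vanished nowhere on $M$ for a suitable family of $w$, I would build inverses and derive a contradiction, using the Behnke--Stein / Runge approximation on the open surface. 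Since the algebra isomorphism preserves spectra, $\varphi^*$ induces a homeomorphism $\psi:M_1\to M_2$ defined by $\delta_p\circ\varphi^{*-1}=\delta_{\psi(p)}$. This map satisfies $w\circ\psi=\varphi^*w$ for all holomorphic $w$, and it extends $\varphi$ on the boundary by continuity.

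\textbf{Step 3 (regularity and conformality).} Near any interior point of $M_2$ there is a holomorphic $w$ with $dw\ne0$, giving a local coordinate. In that coordinate $\psi$ is given by $w\circ\psi$, which is holomorphic on $M_1$. Hence $\psi$ is a biholomorphism, smooth up to the boundary by the same argument with boundary charts, and $\psi^{-1}$ is treated symmetrically. A biholomorphism between surfaces equipped with conformal metrics satisfies $\psi^*g_2=\rho g_1$ with $\rho>0$ smooth.

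\textbf{Step 4 (the factor $\rho$ and the nonorientable case).} On $\Gamma_1$ the map $\psi$ coincides with the isometry $\varphi$, so tangential lengths agree. Conformality then forces $\rho|_{\Gamma_1}=1$. If $M$ is nonorientable, I would pass to the orientable double cover. Its DtN-map is determined by $\Lambda_g$, since functions on the double of $\Gamma$ are split into even and odd parts under the involution. I would then apply the argument above equivariantly with respect to the deck involution, which is antiholomorphic, and descend the resulting $\psi$ to $M$.
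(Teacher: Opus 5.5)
You follow Belishev's algebraic route \cite{Be}, one of the two proofs the paper cites. The paper does not prove the theorem itself; it only sketches the elementary argument of \cite{Sr2} for simply connected surfaces: make $g$ flat by a factor $\rho$ with $\rho|_\Gamma=1$, immerse into $\mathbb R^2$, and finish with conformal maps. Your route reaches multiply connected orientable surfaces at the price of Stein and Gelfand theory.

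\textbf{The genuine gap is Step 4, the non-orientable case.}

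\begin{itemize}
\item Every boundary circle has an orientable collar. So $\hat\Gamma=\pi^{-1}(\Gamma)$ is two copies of $\Gamma$ exchanged by the deck involution $\sigma$, and the DtN map of $\hat M$ splits as $\hat\Lambda=\Lambda^{\mathrm{even}}\oplus\Lambda^{\mathrm{odd}}$.
\item The even/odd splitting gives you only $\Lambda^{\mathrm{even}}=\Lambda_g$. The odd part $\Lambda^{\mathrm{odd}}$ is the DtN map of the Laplacian on sections of the orientation line bundle of $M$. That is a different boundary value problem, and nothing in your argument expresses it through $\Lambda_g$.
\item You cannot avoid it. For holomorphic $w$, $w\circ\sigma$ is antiholomorphic, so a nonconstant holomorphic function on $\hat M$ is never $\sigma$-even. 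Hence the Step 1 conditions on $\hat M$ always apply $\hat\Lambda$ to odd functions.
\item The natural attempt to recover $\Lambda^{\mathrm{odd}}$ is to take the (odd) harmonic conjugates of $u\circ\pi$ for $u$ harmonic on $M$. This gives $\Lambda^{\mathrm{odd}}$ only up to unknown additive constants on the boundary components, and only for those $f$ whose twisted periods vanish. $\Lambda_g$ determines neither the constants nor which $f$ qualify.
\end{itemize}
So as written you have a proof for orientable $M$ only. The non-orientable case needs a separate idea that does not rely on a global complex structure, for example the Green-function embedding of Lassas--Uhlmann \cite{LaU}.

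\textbf{Two smaller points in the orientable case.}

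(i) With several boundary components, $\varphi$ might a priori preserve the induced orientation on some components and reverse it on others. Conjugating the complex structure of $M_2$ cannot repair a mixed sign, so you must rule it out. Write $\varphi^*\partial_{s_2}=\epsilon\,\partial_{s_1}\varphi^*$ with $\epsilon=\pm1$ locally constant, and suppose $\epsilon$ changes sign. Take $f+ih\in\varphi^*\mathcal A_2$ with harmonic extensions $u,v$. The harmonic $1$-forms $dv\mp *du$ have zero Cauchy data on the components where $\epsilon=\pm1$ respectively. By unique continuation both vanish identically, so $u,v$ are constant, contradicting $\dim\mathcal A_2=\infty$. The same unique-continuation argument is what actually shows that your two boundary conditions characterise $\mathcal A_j$ when $M$ is multiply connected.

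(ii) In Step 2 you cannot directly produce a single zero-free element of $\ker\chi$. Compactness only gives $w_1,\dots,w_n\in\ker\chi$ with no common zero on $M$. You then need a B\'ezout identity $\sum_j g_jw_j=1$ with $g_j\in\mathcal H(M)$. Obtain it from $\phi_j=\bar w_j/\sum_k|w_k|^2$ by solving $\bar\partial$-equations on an open surface containing $M$. Then $1=\sum_j\chi(g_j)\chi(w_j)=0$, a contradiction.
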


There exist at least two proofs of the theorem by Lassas -- Uhlmann
\cite{LaU} and by Belishev \cite{Be}, see also \cite{Sar}. Proofs
presented in \cite{LaU} and \cite{Be} are very different and not
easy. But all difficulties disappear in the case of simply connected
surfaces. In the latter case an elementary proof of Theorem
\ref{Th1.1} is presented in \cite{Sr2}. The proof is based on the
following almost obvious fact: for a compact  Riemannian surface
$(M,g)$, there exists a function $0<\rho\in C^\infty(M)$ such that
$\rho|_\Gamma=1$ and $\rho g$ is a flat metric, i.e., its Gaussian
curvature is identically equal to zero. This implies, in the case of
a simply connected $M$, that $(M,\rho g)$ can be isometrically
immersed into the Euclidean plane. This reduces Theorem \ref{Th1.1}
to the partial case when both surfaces $M_1$ and $M_2$ are simply
connected, probably multi-sheet, planar domains and metrics $g_1$
and $g_2$ coincide with the standard Euclidean metric of ${\R}^2$.
In the latter case, the theorem is easily proved by using basic
properties of conformal maps.

The rest of the article is arranged as follows. Flat Riemannian surfaces are shortly discussed in Section 2.
In Section 3, we demonstrate that, for simply connected Riemannian surfaces, the Calder\'{o}n problem is equivalent to the problem of
recovering a simply
connected domain $\Omega\subset{\R}^2$, furnished by the Euclidean metric, from its DtN-map $\Lambda_\Omega$ (Problem \ref{Pr3.1} below).
In Section 4, we introduce the space ${\mathcal A}(\gamma)$ of boundary traces of holomorphic functions and distinguish the subspace
${\mathcal A}_s(\gamma)\subset{\mathcal A}(\gamma)$ consisting of functions that represent simple curves (i.e., curves with no
self-intersection). Then we prove the uniqueness theorem (Theorem \ref{Th4.1} below) which serves as a base of our reconstruction algorithm.

Sections 2--4 actually reproduce arguments of \cite{Sr2} and \cite{Sar} with minor modifications oriented to the numerical solution of the
Calder\'{o}n problem.
This is true for Theorem \ref{Th4.1} and Proposition \ref{P4.1} as well. Nevertheless, we present proofs of two latter statements
because of their crucial role.

Our reconstruction algorithm is described in Section 5. Let
$\gamma(s)$ be the arc length parametrization of the boundary curve
$\Gamma$ of an unknown domain $\Omega$. Then $\gamma(s)$ solves an
obvious first order differential equation (Equation \eqref{5.1}
below). A less obvious zero order pseudodifferental equation for
$\gamma(s)$ (Equation \eqref{5.2} below) is equivalent to the
statement $\gamma\in {\mathcal A}(\gamma)$. We emphasize that the
operator ${\mathcal H}_\gamma$ participating in \eqref{5.2} is known
since it is easily expressed through the DtN-map. We represent the
unknown function $\gamma$ by its Fourier series, the system
\eqref{5.1}--\eqref{5.2} is replaced by an infinite system of
quadratic equations in Fourier coefficients $\widehat\gamma_n$. Of
course practically we replace the Fourier series by its partial sum
and solve a finite system of quadratic algebraic equation. The
latter system is solved by some iteration method (a version of the
classical gradient descent method). As in any iteration method, the
choice of an initial approximation is of a crucial importance. The
system \eqref{5.1}--\eqref{5.2} has many solutions belonging to
${\mathcal A}(\gamma)$, but only one of them belongs to
${\mathcal A}_s(\gamma)$ by Theorem \ref{Th4.1}. If the unknown domain $\Omega$
is sufficiently close to the unit disk, we can choose an initial
approximation $\gamma^0(s)$ close to $e^{is}$. This is the main reason why we reconstruct only domains close to a disk.

Several numerical examples are presented in the final Section 6. In our opinion, the following statement is the main thesis of the article:

{\it The Dirichlet-to-Neumann operator is very sensitive to small deviations of the shape of a domain.}

Most probably, our method can be also applied for reconstruction of more general simply connected domains $\Omega$ that are not close to a disk if
we additionally know a sufficiently good approximation $\widetilde\Omega$ of $\Omega$, in order to choose an appropriate initial approximation
$\gamma_0$ for our iteration algorithm. So far we have no such an example because of the following. In the current work, the forward problem
(computing
the matrix of the DtN-map for a given domain) is solved by some method applicable for domains close to the disk only, see details
at the beginning of Section 6.
We intend to improve the latter method and to reconstruct general simply connected domains in our forthcoming work.

\section{Flat metrics}

A Riemannian metric on a two-dimensional manifold is said to be {\it a flat metric} if its Gaussian curvature is identically equal to zero.

\begin{proposition} \label{P2.1}
Let $(M,g)$ be a compact Riemannian surface with boundary $\Gamma$.
There exists a unique positive function $\rho\in C^\infty(M)$
satisfying the boundary condition $\rho|_\Gamma=1$ and such that
$\rho g$ is a flat metric.
\end{proposition}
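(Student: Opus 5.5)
Write $\rho=e^{2\sigma}$ and use the standard formula for the Gaussian curvature of a conformally changed metric in dimension two:
$$
K_{e^{2\sigma}g}=e^{-2\sigma}\big(K_g-\Delta_g\sigma\big),
$$
with $\Delta_g=g^{ij}\nabla_i\nabla_j$ as in the Introduction. With this sign convention $\Delta_g$ is the analyst's Laplacian, nonpositive as an operator. The metric $\rho g$ is flat if and only if $\sigma$ solves the linear Dirichlet problem
\begin{equation*}
\Delta_g\sigma=K_g\quad\mbox{in}\quad M,\qquad \sigma|_\Gamma=0 .
\end{equation*}
The boundary condition $\rho|_\Gamma=1$ is exactly $\sigma|_\Gamma=0$, and positivity of $\rho$ is automatic from the exponential form. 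So the nonlinear-looking problem becomes a linear elliptic boundary value problem for $\sigma$.

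\emph{Existence.} Since $M$ is compact with smooth nonempty boundary and $\Delta_g$ is a uniformly elliptic operator with smooth coefficients and no zeroth-order term, the Dirichlet problem has a unique solution $\sigma\in C^\infty(M)$ for the smooth right-hand side $K_g$. This follows from standard elliptic theory: solvability in $H^1_0(M)$ by the Lax--Milgram theorem applied to the coercive form $\int_M\langle d\sigma,d\tau\rangle_g\,dV_g$, where coercivity comes from the Poincar\'e inequality since $\Gamma\neq\emptyset$, and then regularity up to the boundary. Setting $\rho=e^{2\sigma}$ gives the required function.

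\emph{Uniqueness.} Let $\rho$ be any positive smooth function with $\rho|_\Gamma=1$ and $\rho g$ flat. Put $\sigma=\frac12\log\rho$, which is smooth because $\rho>0$. The curvature formula shows that $\sigma$ solves the same Dirichlet problem. The difference of two solutions is $g$-harmonic and vanishes on $\Gamma$, so by the maximum principle (or by integrating $|d\sigma|^2$ by parts) it is identically zero.

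The only point requiring care is the curvature transformation formula with the paper's sign convention for $\Delta_g$. It can be verified in local isothermal coordinates, where $g=e^{2\lambda}(dx^2+dy^2)$ and $K_g=-e^{-2\lambda}(\partial_x^2+\partial_y^2)\lambda$. Alternatively, one can use a general local frame computation so as not to rely on the existence of isothermal coordinates. Everything else is routine elliptic theory, and connectedness of $M$ together with $\Gamma\neq\emptyset$ is what makes the Dirichlet problem uniquely solvable.
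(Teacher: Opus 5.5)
Your proof is correct and follows essentially the standard argument behind the references the paper cites for this proposition; the paper itself gives no proof. In that argument, writing $\rho=e^{2\sigma}$ turns flatness of $\rho g$ into the linear Dirichlet problem $\Delta_g\sigma=K_g$, $\sigma|_\Gamma=0$, whose unique smooth solvability on a connected compact surface with $\Gamma\neq\emptyset$ gives both existence and uniqueness, and your formula $K_{e^{2\sigma}g}=e^{-2\sigma}(K_g-\Delta_g\sigma)$ has the right sign for the paper's convention $\Delta_g=g^{ij}\nabla_i\nabla_j$.
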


For the proof see \cite[Lemma 2.1]{Sr2} and \cite[Proposition 3.1]{Sar}.

It is well known (and can be easily proved) that a flat  Riemannian
surface $(M,g)$ is locally isometric to the Euclidean plane, i.e.,
for every point $a\in M$, there exist a neighborhood $U_a\subset M$
and smooth injective map
\begin{equation}
i_a:U_a\to{\mathbb C}={\R}^2
                                \label{2.1}
\end{equation}
such that $i_a^*e=g|_{U_a}$ for the standard Euclidean metric
$e=|dz|^2=dx^2+dy^2$ of the plane. If additionally the
surface $M$ is oriented and the map \eqref{2.1} is assumed to
transform the chosen orientation of $M$ to the standard orientation
of ${\R}^2$, then the map $i_a$ is unique up to the composition with
a shift and rotation of the plane. Moreover, the local isometry
\eqref{2.1} uniquely extends along every curve starting at $a$. More
precisely, the latter statement means the following. Let
$\gamma:[0,1]\to M$ be a continuous curve satisfying $\gamma(0)=a$.
There exists a finite sequence $0=t_0<t_1<\dots<t_n=1$ and, for
every point $a_k=\gamma(t_k)$, there exists a local isometry
\begin{equation}
i_{a_k}:U_{a_k}\to{\R}^2\quad(k=0,1,\dots,n)
                                \label{2.2}
\end{equation}
such that (a) $\gamma[0,1]\subset\cup_{k=0}^nU_{a_k}$, (b) $U_{a_k}\cap U_{a_{k+1}}\neq\emptyset$, (c) for $k=0$, the map \eqref{2.2}
coincides with \eqref{2.1}, and (d) the maps $i_{a_k}$, and $i_{a_{k+1}}$ coincide on $U_{a_k}\cap U_{a_{k+1}}$. Under conditions (a)--(d)
all maps $i_{a_k}$ are uniquely determined by the map \eqref{2.1}.

Boundary points give no difficulty to the construction of the previous paragraph. Some points of the sequence $(a_0,\dots a_n)$ can belong
to $\partial M$ or even the inclusion $\gamma[0,1]\subset\partial M$ can hold.

Recall {\it the monodromy principle} from Complex Analysis. If $\Omega\subset{\mathbb C}$ is a simply connected domain, a holomorphic function $f$ is defined in some neighborhood of a point $a\in\Omega$ and admits a holomorphic continuation along every curve $\gamma:[0,1]\to \Omega$ starting at $a$, then $f$ uniquely extends to a holomorphic function on the whole of $\Omega$. We emphasize the importance of the condition: $\Omega$ is a simply connected domain. Analytic continuation in multi-connected domains can result multi-valued analytic functions. The same monodromy principle is valid for flat metrics with the essential simplification: we do not need to assume the existence of continuation along curves, such a continuation always exists. In this way we arrive to the following statement.

\begin{proposition} \label{P2.2}
A compact simply connected flat Riemannian surface $(M,g)$ admits a
locally isometric immersion into the Euclidean plane, i.e., there
exists a smooth map
\begin{equation}
I:M\to{\R}^2
                                \label{2.3}
\end{equation}
such that, for every point $x\in M$, the differential $d_xI:T_xM\to{\R}^2$ is an isometry of the tangent space $T_xM$ furnished with the dot product
$\langle\cdot,\cdot\rangle_g$ onto ${\R}^2$ furnished with the standard Euclidean dot product.
If additionally the surface $M$ is oriented and the map \eqref{2.3} is assumed to transform the chosen orientation of $M$ to the standard
orientation of ${\R}^2$, then the immersion $I$ is unique up to the composition with a shift and rotation of the plane.
\end{proposition}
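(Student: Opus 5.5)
We will prove Proposition \ref{P2.2} by a monodromy argument, using the local isometries \eqref{2.1} and their continuation along curves described above. First we may assume $M$ is oriented: a simply connected surface is orientable, so we fix an orientation. We then restrict to orientation-preserving local isometries into ${\R}^2$. For such maps the following rigidity fact holds. If two orientation-preserving local isometries $i,i'$ are defined on a connected open set $U$, then $i'=T\circ i$ for a unique proper Euclidean motion $T$ (a shift composed with a rotation). In particular, if $i$ and $i'$ agree on a nonempty open subset of $U$, they agree on all of $U$. We would verify this by noting that $i'\circ i^{-1}$ is a local isometry of open subsets of the plane. Hence its differential is a rotation matrix that is locally constant, since a smooth map with orthogonal differential everywhere is affine. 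Connectedness of $U$ then gives a single motion $T$.

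Next we define $I$. Fix a base point $a\in M$ and a local isometry $i_a$ as in \eqref{2.1}. For $x\in M$, choose a continuous curve $\gamma$ from $a$ to $x$; one exists because $M$ is connected. Continue $i_a$ along $\gamma$ as in \eqref{2.2}, and set $I(x)=i_{a_n}(x)$. We must check two things. The first is that the continued germ at the endpoint does not depend on the choice of the partition and of the neighborhoods $U_{a_k}$. This follows from the rigidity fact by the standard common-refinement argument. The second is that the result is independent of the curve. For this we use homotopy invariance: if $\gamma_s$ is a homotopy with fixed endpoints, then by compactness of $[0,1]^2$ and a Lebesgue-number argument, curves $\gamma_s$ and $\gamma_{s'}$ with $s'$ close to $s$ are covered by the same chain of neighborhoods. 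The continuations along them therefore coincide, and $s\mapsto$ (germ at $x$) is locally constant, hence constant. Since $M$ is simply connected, any two curves from $a$ to $x$ are homotopic with fixed endpoints, so $I$ is well defined.

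Near each point, $I$ coincides with one of the local isometries $i_{a_k}$. Hence $I$ is smooth and $d_xI$ is an orientation-preserving isometry of $(T_xM,\langle\cdot,\cdot\rangle_g)$ onto ${\R}^2$. Boundary points cause no difficulty, as remarked before the statement. For uniqueness, let $I'$ be another orientation-preserving locally isometric immersion. Restrict both maps to a small neighborhood of $a$ on which they are injective, and apply the rigidity fact there to obtain a motion $T$ with $I'=T\circ I$ near $a$. The set where $I'=T\circ I$ holds on a neighborhood is open. It is also closed: at a limit point $x$, apply rigidity on a connected neighborhood of $x$ on which both maps are injective. Since $M$ is connected, $I'=T\circ I$ on all of $M$.

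The main obstacle is the homotopy-invariance step, namely that continuation along homotopic curves yields the same result. This is the only place where simple connectedness enters. We would model it on the standard proof of the monodromy theorem in complex analysis, replacing holomorphic germs by germs of orientation-preserving local isometries. The rigidity fact plays the role of the identity theorem.
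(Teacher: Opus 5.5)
Your proposal is correct and follows essentially the same route as the paper. The paper obtains Proposition \ref{P2.2} from three ingredients: local isometries $i_a$ exist, they continue uniquely along every curve, and the monodromy principle then applies on the simply connected surface (details are deferred to the cited reference). Your rigidity lemma, playing the role of the identity theorem, together with your homotopy-invariance step, are exactly what that sketch relies on.
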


See details of the proof of Proposition \ref{P2.2} in \cite{Sr2}.

In the simplest case the immersion \eqref{2.3} is an isometry between $(M,g)$
and $(\Omega,e)=(I(M),e)$ ($e$ is the standard Euclidean
metric on ${\R}^2$). In the general case, the immersion \eqref{2.3} is not injective, the closed curve $\Gamma=I(\partial M)$
can have self-intersections, and $\Omega=I(M)$ is {\it a simply connected multi-sheet domain}. See \cite{JS0} for the definition of
a simply connected multi-sheet domain.

In what follows we study the Calder\'{o}n problem for a compact simply connected Riemannian surfaces.
Moreover, we assume the immersion \eqref{2.3} to be injective. In such a case $(M,g)$ can be identified with the closed planar domain
$\Omega=I(M)\subset{\mathbb C}$ bounded by the simple (i.e., with no self-intersection) smooth closed curve $\Gamma=I(\partial M)$.
All our results are actually valid for simply connected multi-sheet domains, but some definitions below should be slightly modified in the latter case.

\section{The Calder\'{o}n problem for a simply connected planar domain}

Let $\Omega\subset{\mathbb C}={\R}^2$ be a compact simply connected
domain  bounded by a simple (i.e., with no self-intersection)
smooth curve $\Gamma$. The domain $\Omega$ is considered as a flat
Riemannian surface with the standard Euclidean metric
$|dz|^2=dx^2+dy^2$. Let
\begin{equation}
\Lambda_\Omega:C^\infty(\Gamma)\to C^\infty(\Gamma)
                                \label{3.1}
\end{equation}
be the DtN-map of the domain $\Omega$. Thus, the notation $\Lambda_\Omega$ is used instead of $\Lambda_g$, see \eqref{1.1}.
We orient the curve $\Gamma$ choosing the counter clockwise direction to be the positive one.

Define the operator $D=D_s:C^\infty(\Gamma)\to C^\infty(\Gamma)$ by $D=-i\frac{d}{ds}$, where $\frac{d}{ds}$ is the differentiation with respect
to the arc length in the positive direction and $i$ is the imaginary unit.
The restriction $D|_{C_0^\infty(\Gamma)}:C_0^\infty(\Gamma)\to C_0^\infty(\Gamma)$ of $D$ to the subspace $C_0^\infty(\Gamma)$ (see \eqref{1.2})
is an isomorphism. Let
\begin{equation}
D^{-1}:C_0^\infty(\Gamma)\to C_0^\infty(\Gamma)
                                \label{3.2}
\end{equation}
be the inverse of $D|_{C_0^\infty(\Gamma)}$. Thus, for $f\in C_0^\infty(\Gamma)$, $D^{-1}f$ is the unique anti-derivative of $f$ with the zero mean
value.

We consider the inverse problem of recovering the domain $\Omega$ from its DtN-map.
The definition \eqref{3.1} should be specified since $\Gamma$ is an unknown curve, the length of $\Gamma$ is known only.
Without lost of generality the length of $\Gamma$ can be assumed to be equal to $2\pi$ since the DtN-operator changes in an obvious
way under the transform
$\Omega\mapsto c\,\Omega\ $ for a constant $c>0$.

Let
${\mathbb S}=\{e^{i s}\mid  s\in{\R}\}$ be the unit circle oriented in the counter clockwise direction. For a function $u\in C^\infty({\mathbb S})$,
we write $u(s)$ instead of $u(e^{i s})$.
We parameterize the curve $\Gamma$ by the arc length $s$ measured in the positive direction from an initial point. The parametrization determines
a smooth $2\pi$-periodic function
$\gamma:{\R}\to {\mathbb C}$ satisfying
\begin{equation}
\Big|\frac{d\gamma( s)}{d s}\Big|=1.
                                \label{3.3}
\end{equation}
The same function can be treated as a diffeomorphism $\gamma:{\mathbb S}\to\Gamma$ preserving the arc length and orientation. With the help of
the diffeomorphism, we transfer the operator \eqref{3.1} onto the circle ${\mathbb S}$, i.e., define the operator
\begin{equation}
\Lambda_\gamma=\gamma^* \Lambda_\Omega\gamma^* {}^{-1}:C^\infty({\mathbb S})\to C^\infty({\mathbb S}).
                                \label{3.4}
\end{equation}
We call $ \Lambda_\gamma$ {\it the DtN-map of the domain $\Omega$ in the natural parametrization}.
Our inverse problem is precisely posed as follows.

\begin{problem} \label{Pr3.1}
Let
$\Omega\subset{\mathbb C}$ be a compact simply connected domain  bounded by a simple smooth curve $\Gamma$ of length $2\pi$.
 Given the operator \eqref{3.4}, one has to recover the closed curve
$\Gamma=\gamma({\mathbb S})$ up to a shift and rotation. Equivalently: Given the operator \eqref{3.4}, one has to recover the
$2\pi$-periodic complex-valued function $\gamma(s)$ satisfying \eqref{3.3}.
\end{problem}

Let us find the value of the operator $ \Lambda_\gamma$ on the function $\gamma\in C^\infty({\mathbb S})$. By \eqref{3.4},
\begin{equation}
 \Lambda_\gamma\gamma=(\gamma^* \Lambda_\Omega\gamma^* {}^{-1})\gamma=(\gamma^* \Lambda_\Omega)(Id),
                                \label{3.5}
\end{equation}
where the function $Id\in C^\infty(\Gamma)$ is defined by $Id(z)=z$ for $z\in \Gamma$. Being defined by $Z(z)=z$, the function
$Z\in C^\infty(\Omega)$ is harmonic in $\Omega$ and satisfies $Z|_\Gamma=Id$. By the definition of the DtN-operator,
$$
\Lambda_\Omega(Id)=\frac{\partial Z}{\partial\nu}\Big|_\Gamma=\nu_x+i\nu_y=\nu,
$$
where the unit outer normal $\nu=(\nu_x,\nu_y)$ to $\Gamma$ is identified with the complex number $\nu=\nu_x+i\nu_y$. Together with \eqref{3.5},
this gives
$\Lambda_\gamma\gamma( s)=(\gamma^* \nu)( s)=\nu(\gamma( s))=-i\,\frac{d\gamma}{d s}$.
We have thus proven
\begin{equation}
\Lambda_\gamma\gamma=-i\,\frac{d\gamma}{d s}.
                                \label{3.6}
\end{equation}
The operator $D^{-1}:C_0^\infty({\mathbb S})\to C_0^\infty({\mathbb S})$ is defined quite similarly to \eqref{3.2}. Since $\gamma$ preserves the arc
length, the operator $D^{-1}$ commutes with $\gamma^*$. Therefore,
applying the operator $D^{-1}$ to the equality \eqref{3.6}, we obtain
\begin{equation}
D^{-1} \Lambda_\gamma\gamma=\gamma.
                                \label{3.7}
\end{equation}
The operator
\begin{equation}
{\mathcal H}_\gamma=D^{-1} \Lambda_\gamma:C^\infty({\mathbb S})\to C^\infty({\mathbb S})
                                \label{3.8}
\end{equation}
is well defined since the range of $\Lambda_\gamma$ coincides with the domain
$C_0^\infty({\mathbb S})$ of $D^{-1}$. The zero order pseudodifferential operator ${\mathcal H}_\gamma$
is sometimes
%где так называют?
called {\it the Hilbert transform of the domain $\Omega$ in the natural parametrization}.
In terms of this operator, the equation \eqref{3.7} is written as follows:
\begin{equation}
{\mathcal H}_\gamma\gamma=\gamma.
                                \label{3.9}
\end{equation}

The equation \eqref{3.9} gives the idea for solving Problem \ref{Pr3.1}: we have to look for a solution $\gamma\in C^\infty({\mathbb S})$
to the pseudodifferential equation \eqref{3.7} satisfying \eqref{3.3}.

\section{The uniqueness theorem}

Let $\Omega\subset{\mathbb C}={\R}^2$ be a compact simply connected domain  bounded by a simple smooth curve $\Gamma$ of length $2\pi$.
We denote by ${\mathcal A}(\Omega)$ the space of complex-valued functions
$f\in C^\infty(\Omega)$ such that the restriction of $f$ to the interior $\Omega\setminus\Gamma$ is a holomorphic function.
We also introduce the space
${\mathcal A}(\Gamma)=\{w|_{\Gamma}\mid w\in{\mathcal A}(\Omega)\}$
of {\it boundary traces of holomorphic functions}.
The trace operator
$$
{\mathcal A}(\Omega)\to{\mathcal A}(\Gamma),\quad w\mapsto w|_{\Gamma}
$$
is an isomorphism by the modulus maximum principle for holomorphic functions.

Repeating arguments of the previous section, we fix a diffeomorphism $\gamma:{\mathbb S}\to\Gamma$ preserving the arc length and orientation
(both are oriented counter clockwise). With the help of the diffeomorphism, we transfer the space ${\mathcal A}(\Gamma)$ to the unit circle,
i.e., define
\begin{equation}
{\mathcal A}(\gamma)=\{\gamma^* f\mid f\in{\mathcal A}(\Gamma)\}.
                                \label{4.1}
\end{equation}

Since ${\mathcal A}(\gamma)\subset C^\infty({\mathbb S})$, every function $f\in{\mathcal A}(\gamma)$ can be considered as a smooth
closed parameterized curve ${\mathbb S}\to{\mathbb C},\  s\mapsto f( s)$. Let ${\mathcal A}_s(\gamma)$ be the set of
$f\in{\mathcal A}(\gamma)$
that correspond to curves without self-intersections (simple curves).

Observe that the function $\gamma\in C^\infty({\mathbb S})$ belongs to ${\mathcal A}(\gamma)$. Indeed, in the previous section,
we have defined the function $Id\in C^\infty(\Gamma)$ by $(Id)(z)=z$. This function belongs to ${\mathcal A}(\Gamma)$ since
it is the restriction to $\Gamma$ of the holomorphic function $Z\in C^\infty(\Omega)$ defined by $Z(z)=z$.
Therefore $\gamma=\gamma^*(Id)\in{\mathcal A}(\gamma)$.

We can now present the precise statement on uniqueness of a solution to Problem \ref{Pr3.1}.

\begin{theorem} \label{Th4.1}
Let the  space ${\mathcal A}(\gamma)$ of functions on the unit circle  ${\mathbb S}$ be defined by \eqref{4.1} for some
compact simply connected domain $\Omega\subset{\mathbb C}$ bounded by a smooth simple curve $\Gamma$ of length $2\pi$. Then

{\rm (1)} The equation
\begin{equation}
\Big|\frac{df}{ds}\Big|=1
                                \label{4.2}
\end{equation}
has a solution $f\in{\mathcal A}_s(\gamma)$.

{\rm (2)} If $f_1,f_2\in{\mathcal A}_s(\gamma)$ are two solutions to the equation \eqref{4.2}, then
$f_1( s)=\alpha f_2( s)+c$ for some complex constants $\alpha$ and $c$ satisfying $|\alpha|=1$.

{\rm (3)} If $f\in{\mathcal A}_s(\gamma)$ is a solution to \eqref{4.2}, then the curve $s\mapsto f(s)$ bounds a domain that is
obtained from $\Omega$ by a shift and rotation.
\end{theorem}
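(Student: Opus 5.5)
Part (1) is immediate: as observed just before the statement, $\gamma=\gamma^*(Id)\in{\mathcal A}(\gamma)$. The curve $s\mapsto\gamma(s)$ is the arc length parametrization of the simple curve $\Gamma$, so $\gamma\in{\mathcal A}_s(\gamma)$, and it satisfies \eqref{4.2} by \eqref{3.3}. The substance is in (2) and (3). I will reduce both to a single claim: every solution $f\in{\mathcal A}_s(\gamma)$ of \eqref{4.2} has the form $f=\alpha\gamma+c$ with $|\alpha|=1$. Then (2) follows by comparing $f_1$ and $f_2$ with $\gamma$. Statement (3) also follows, since $f({\mathbb S})=\alpha\Gamma+c$ bounds $\alpha\Omega+c$, a rotated and shifted copy of $\Omega$.

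To prove the claim, write $f=\gamma^*(w|_\Gamma)$ with $w\in{\mathcal A}(\Omega)$. Thus $w$ is holomorphic inside $\Omega$, smooth up to $\Gamma$, and $f(s)=w(\gamma(s))$. Differentiating along the boundary gives
\[
\frac{df}{ds}=w'(\gamma(s))\,\frac{d\gamma}{ds},
\]
because the tangential derivative of a function that is holomorphic up to the boundary is the complex derivative times the unit tangent. With \eqref{3.3}, equation \eqref{4.2} becomes $|w'|=1$ on $\Gamma$. The main step is to show that $w'$ has no zeros in $\Omega$, using that $f$ is simple. The curve $w|_\Gamma=f$ is a simple closed curve, and I will check that it is positively oriented (see below). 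The argument principle then gives, for each point $\zeta$ off $f({\mathbb S})$, that the number of preimages $\#w^{-1}(\zeta)$ in the interior equals the winding number of $f$ about $\zeta$, which is $0$ or $1$. Hence $w$ is injective in the interior, so $w'\neq0$ there; on $\Gamma$ itself $|w'|=1$. Consequently $\log|w'|$ is harmonic in the interior, continuous up to $\Gamma$, and vanishes on $\Gamma$. By the maximum principle $|w'|\equiv1$, so $w'\equiv\alpha$ is a unimodular constant and $w(z)=\alpha z+c$. This is exactly the claim.

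The point I expect to need the most care is orientation and the degree count in the argument principle. If $f$ traverses its image clockwise, the winding numbers are $0$ or $-1$. A negative count of preimages is impossible unless $w$ has poles, which it does not. So the winding number of $f$ about a point is nonnegative wherever it is defined; the clockwise case therefore forces every winding number to vanish, so $w$ omits every value off $f({\mathbb S})$, leaving only the constant case. A constant $w$ contradicts $|df/ds|=1$. The winding numbers are therefore $0$ or $1$, and the curve is positively oriented. Alternatively, I can count zeros of $w'$ directly: $f'(s)=w'(\gamma(s))\gamma'(s)$, and the rotation index of the simple curve $f$ is $\pm1$ while that of $\gamma$ is $+1$. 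So the winding number of $w'$ along $\Gamma$, which counts the zeros of $w'$ in $\Omega$, is $0$ or $-2$; since this count cannot be negative, it is $0$. This route is cleaner, and I would present it. It uses the Hopf Umlaufsatz for simple closed curves, together with the fact that $w'$ extends smoothly to $\Gamma$ and is nonvanishing there.
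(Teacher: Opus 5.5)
Your proof is correct, and its skeleton matches the paper's. Both reduce (2) and (3) to the claim $f=\alpha\gamma+c$, pass to the holomorphic extension $w$, and show that $w(z)=\alpha z+c$. Your first variant is essentially the paper's argument: the argument principle for $w$ makes $w$ a biholomorphism onto the domain bounded by $f({\mathbb S})$, and a biholomorphism with $|w'|=1$ on $\Gamma$ is a rigid motion.

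The paper leaves two points unargued at exactly those steps. First, it simply asserts that $f$ runs counterclockwise, citing \eqref{4.3}, which is the conclusion being proved. Second, it quotes the rigidity of boundary-arc-length-preserving biholomorphisms as ``well known''. You supply both. You settle orientation by the nonnegativity of preimage counts together with the open mapping theorem. You prove rigidity by the maximum principle for the harmonic function $\log|w'|$, which vanishes on $\Gamma$.

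Your preferred second variant is a genuinely different route. You apply the argument principle to $w'$ rather than to $w$, and combine it with Hopf's Umlaufsatz for the simple regular curve $f$. This shows directly that $w'$ has no zeros, since the count is either $0$ or the impossible value $-2$. Orientation then comes out automatically rather than needing a separate discussion. Injectivity of $w$, the Jordan domain $\Omega'$, and the biholomorphism picture are never needed, because the $\log|w'|$ step uses only $w'\neq0$. The cost is invoking the Umlaufsatz, a topological result comparable in depth to the Jordan curve theorem that the paper's route implicitly uses. The gain is a shorter, self-contained proof that closes both gaps in the paper's sketch.
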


\begin{proof}
As mentioned above, the function $\gamma$ belongs to ${\mathcal A}^\infty(\gamma)$. It solves the equation \eqref{4.2} and
belongs to ${\mathcal A}_s(\gamma)$. This proves the first statement.

To prove last two statements of the theorem, it suffices to demonstrate that every solution
$f\in{\mathcal A}_s(\gamma)$ to the equation \eqref{4.2} is expressed through $\gamma$ by
\begin{equation}
f(s)=\alpha\gamma(s)+c,\quad |\alpha|=1.
                                \label{4.3}
\end{equation}

Let $f\in{\mathcal A}_s(\gamma)$ be a solution to the equation \eqref{4.2}. The function $g=\gamma^* {}^{-1}f$ belongs to
${\mathcal A}(\Gamma)$ and solves the equation
\begin{equation}
\Big|\frac{dg}{ds}\Big|=1.
                                \label{4.4}
\end{equation}
By the definition of ${\mathcal A}(\Gamma)$, there exists a function $w\in C^\infty(\Omega)$ holomorphic in
$\Omega\setminus\Gamma$ and satisfying the boundary condition
$
w|_\Gamma =g.
$
We set $\Gamma'=w(\Gamma)$. Then
$$
\Gamma'=g(\Gamma)=(\gamma^* {}^{-1}f)(\Gamma)=(f\circ\gamma^{-1})(\Gamma)=f({\mathbb S}).
$$
Since $f\in{\mathcal A}_s(\gamma)$, $\Gamma'$ is a smooth simple closed curve. By \eqref{4.3}, the map
$ s\mapsto f(s)$ is the parametrization of $\Gamma'$ by the arc length and $\Gamma'(s)$ runs counter clockwise while $s$ increases.
$\Gamma'$ bounds a compact domain $\Omega'$. Applying {\it the argument principle} \cite[Chapter 1, \S5.23]{LS} to the holomorphic function $w$,
we see that $w$ is a biholomorphism of $\Omega$ onto $\Omega'$. By \eqref{4.2} and \eqref{4.4}, the restriction of the biholomorphism $w$
to $\Gamma$ preserves the arc length, i.e., $\big|w'|_\Gamma\big|=1$. It is well known (and easily proved) that a biholomorphism preserving
the boundary arc length is the composition of a shift and rotation, i.e.,
$$
w(z)=\alpha z+c,\quad|\alpha|=1.
$$
Together with the equality $w|_\Gamma=g=f\circ\gamma^{-1}$, this gives
$$
(f\circ\gamma^{-1})(z)=\alpha z+c\quad(z\in\Gamma).
$$
Setting $z=\gamma(s)$ here, we arrive to \eqref{4.3}.
\end{proof}

{\bf Remark.} Theorem \ref{Th4.1} is not true if the hypothesis
$f\in{\mathcal A}_s(\gamma)$ is replaced with $f\in{\mathcal A}(\gamma)$, as the following example shows. Let $\Omega$ coincide
with the unit disk and $\gamma(s)=e^{i s}$. The function
$f(s)=\frac{1}{n}\,e^{ni s}$ with $n>1$ belongs to
${\mathcal A}(\gamma)$ and solves the equation \eqref{4.2} but cannot be represented in the form \eqref{4.3}.

\begin{proposition} \label{P4.1}
Under hypotheses of Theorem \ref{Th4.1}, a function $f\in C_0^\infty({\mathbb S})$ belongs to ${\mathcal A}(\gamma)$ if and only if
it satisfies ${\mathcal H}_\gamma f=f$.
\end{proposition}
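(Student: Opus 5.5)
The plan is to reduce both directions to one identity. For $u\in C^\infty(\Omega)$ harmonic with $u|_\Gamma=g$, we have
$$
\Lambda_\Omega g=-i\,\frac{dg}{ds}
$$
exactly when $u$ is holomorphic. Pulled back by $\gamma^*$, and using that $D$ commutes with $\gamma^*$ because $\gamma$ preserves arc length, this identity reads $\Lambda_\gamma f=Df$. The equation ${\mathcal H}_\gamma f=f$ is then obtained, or undone, by applying $D^{-1}$ or $D$. This is the mechanism already used for the special function $f=\gamma$ in \eqref{3.6}--\eqref{3.9}.

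\emph{Necessity.} Let $f\in{\mathcal A}(\gamma)\cap C_0^\infty({\mathbb S})$, so $f=\gamma^*(w|_\Gamma)$ with $w\in{\mathcal A}(\Omega)$.
\begin{itemize}
\item Since $w$ is harmonic, $\Lambda_\Omega(w|_\Gamma)=\partial w/\partial\nu=w'\,\nu$.
\item Let $\tau=d\gamma/ds$ be the unit tangent. For the counterclockwise orientation the outer normal is $\nu=-i\tau$; this is the same convention that gave \eqref{3.6}.
\item Hence $\partial w/\partial\nu=-i\,w'\tau=-i\,\frac{d}{ds}(w\circ\gamma)$, so $\Lambda_\gamma f=Df$.
\item Since $f$ has zero mean, $D^{-1}Df=f$, and therefore ${\mathcal H}_\gamma f=f$.
\end{itemize}
This is the only place where the hypothesis $f\in C_0^\infty({\mathbb S})$ is used.

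\emph{Sufficiency.} Let $f\in C_0^\infty({\mathbb S})$ satisfy $D^{-1}\Lambda_\gamma f=f$. Applying $D$ gives $\Lambda_\gamma f=Df=-i\,df/ds$.
\begin{itemize}
\item Set $g=f\circ\gamma^{-1}$ and let $u=a+ib$ be its harmonic extension, with $a,b$ real. Since $\Lambda_\Omega$ maps real functions to real functions, separating real and imaginary parts gives
$$
\frac{\partial a}{\partial\nu}=\frac{\partial b}{\partial s},\qquad \frac{\partial b}{\partial\nu}=-\frac{\partial a}{\partial s}\quad\text{on }\Gamma .
$$
\item Because $\Omega$ is simply connected, $a$ has a harmonic conjugate $\tilde a$, smooth up to $\Gamma$. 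By the necessity computation applied to the holomorphic function $a+i\tilde a$, we get $\partial a/\partial\nu=\partial\tilde a/\partial s$.
\item Hence $\frac{d}{ds}(b-\tilde a)=0$ on $\Gamma$, so $b-\tilde a$ is constant on $\Gamma$. Being harmonic in $\Omega$, it is then constant in $\Omega$ by the maximum principle.
\item Thus $u=a+i\tilde a+ic$ is holomorphic in $\Omega\setminus\Gamma$ and smooth on $\Omega$, so $g\in{\mathcal A}(\Gamma)$ and $f=\gamma^*g\in{\mathcal A}(\gamma)$.
\end{itemize}
Only the first of the two boundary relations is needed here.

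\emph{Main obstacle.} The argument has no deep step; the point needing care is bookkeeping. First, the sign convention $\nu=-i\,d\gamma/ds$ must be consistent with the counterclockwise orientation. Second, the harmonic conjugate $\tilde a$ must be smooth up to the boundary; this follows from elliptic boundary regularity for a smooth domain, because $\nabla\tilde a$ is a rotation of $\nabla a$. Third, one should note that $D$ and $D^{-1}$ are inverse to each other only on zero-mean functions.
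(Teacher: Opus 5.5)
Your proof is correct, but it takes a different route from the paper's. The paper never works with Cauchy--Riemann equations. It imports Proposition 6.1 of \cite{Sar}, which gives the description \eqref{4.5}, $\mathcal{A}(\gamma)=\{a+\mathcal{H}_\gamma a\mid a\in C^\infty(\mathbb{S})\}$, together with the identity $(1-(\Lambda_\gamma D^{-1})^2)Da=0$. From that identity it deduces $\mathcal{H}_\gamma^2=1$ on $C_0^\infty(\mathbb{S})$, and the equivalence then becomes pure algebra: $f=a+\mathcal{H}_\gamma a$ forces $\mathcal{H}_\gamma f=f$, and conversely one takes $a=f/2$.

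You instead show directly that, for zero-mean $f$, the condition $\Lambda_\gamma f=Df$ is exactly the boundary form of the Cauchy--Riemann equations. Necessity extends the computation \eqref{3.6} from $\gamma$ to an arbitrary holomorphic $w$. Sufficiency uses a harmonic conjugate and the maximum principle.

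Your version is self-contained. It uses only the definition of the DtN-map, elliptic regularity, and the maximum principle. It also shows exactly where simple connectivity enters: a single-valued conjugate $\tilde a$ exists, and $\Gamma$ is connected. In a multiply connected domain, $b-\tilde a$ would only be constant on each boundary component separately.

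The paper's route, in exchange for relying on a black box, gets two things. First, it produces the structural fact $\mathcal{H}_\gamma^2=1$ on $C_0^\infty(\mathbb{S})$, so $\tfrac12(1+\mathcal{H}_\gamma)$ is a projector onto the zero-mean holomorphic traces; this is consistent with the $\pm1$ structure of the matrices $H_\gamma$ exploited in Section 5. Second, it sits inside the framework of \cite{Sar}, which extends to surfaces that are not simply connected. Your computation would also give $\mathcal{H}_\gamma^2=1$ with one more line: for real zero-mean $a$, $\mathcal{H}_\gamma a$ equals $i\tilde a$ up to an additive constant.
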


\begin{proof}
We will demonstrate that this statement follows from Proposition 6.1
of \cite{Sar}, where the notation ${\mathcal A}^\infty(\Gamma,g)$ is
used instead of our ${\mathcal A}(\gamma)$. The latter proposition
is proved in \cite{Sar} for a general compact  Riemannian surface
$(M,g)$ that is not assumed to be simply connected. In the case of a
simply connected surface, in particular under hypotheses of Theorem
\ref{Th4.1}, Proposition 6.1 of \cite{Sar} states that
\begin{equation}
{\mathcal A}(\gamma)=\{a+{\mathcal H}_\gamma a\mid a\in C^\infty({\mathbb S})\}.
                                \label{4.5}
\end{equation}
Proposition 6.1 of \cite{Sar} states also that, under hypotheses of Theorem \ref{Th4.1},
$$
\Big(1-(\Lambda_\gamma D^{-1})^2)Da=0\quad\mbox{for every}\quad a\in C^\infty({\mathbb S}),
$$
where 1 is the identity operator (see the remark at the end of Section 6 of \cite{Sar}).
We write the latter statement in the form
$$
Da-\Lambda_\gamma D^{-1}\Lambda_\gamma a=0\quad\mbox{for every}\quad a\in C^\infty({\mathbb S}).
$$
Applying the operator $D^{-1}$ to this equality and using \eqref{3.8}, we obtain
$$
D^{-1}Da={\mathcal H}_\gamma^2 a\quad\mbox{for every}\quad a\in C^\infty({\mathbb S}).
$$
Since $D^{-1}Da=a$ for $a\in C_0^\infty({\mathbb S})$, this implies
\begin{equation}
{\mathcal H}_\gamma^2 a=a\quad\mbox{for every}\quad a\in C_0^\infty({\mathbb S}).
                                \label{4.6}
\end{equation}

Let now $f\in C_0^\infty({\mathbb S})$. By \eqref{4.5}, $f$ belongs to ${\mathcal A}(\gamma)$ if and only if it can be represented in the
form
\begin{equation}
f=a+{\mathcal H}_\gamma a
                                \label{4.7}
\end{equation}
for some $a\in C^\infty({\mathbb S})$. Since $f\in C_0^\infty({\mathbb S})$ and ${\mathcal H}_\gamma a\in C_0^\infty({\mathbb S})$, \eqref{4.7}
implies $a\in C_0^\infty({\mathbb S})$ and \eqref{4.6} holds. Applying the operator ${\mathcal H}_\gamma$ to the equality \eqref{4.7}
and using \eqref{4.6}, we obtain
$$
{\mathcal H}_\gamma f={\mathcal H}_\gamma a+{\mathcal H}_\gamma^2 a={\mathcal H}_\gamma a+a=f.
$$
These arguments are invertible.
\end{proof}

\section{Numerical solution of the Calder\'{o}n problem}

For convenience we reproduce equations \eqref{3.3} and \eqref{3.9} here:
\begin{equation}
\Big|\frac{d\gamma( s)}{d s}\Big|=1,
                                \label{5.1}
\end{equation}
\begin{equation}
{\mathcal H}_\gamma\gamma=\gamma.
                                \label{5.2}
\end{equation}

Let again $\Omega\subset{\R}^2$ be a compact simply connected domain  bounded by a simple smooth curve $\Gamma$ of length $2\pi$.
But now $\Omega$ is an unknown domain, we are given the DtN-map $\Lambda_\gamma$ only for a diffeomorhism
$\gamma:{\mathbb S}\to\Gamma$ preserving the arc length. The knowledge of $\Lambda_\gamma$ is equivalent to the knowledge of
${\mathcal H}_\gamma$ since the differentiation $D:C^\infty({\mathbb S})\to C^\infty({\mathbb S})$ is known.
Given the operator ${\mathcal H}_\gamma$, we try to recover the domain $\Omega$ up to a shift and rotation. In other words,
we look for the function $\gamma$ up to the transform
\begin{equation}
\gamma\mapsto\alpha\gamma+c,\quad(|\alpha|=1,c\in{\mathbb C}).
                                \label{5.3}
\end{equation}
The function $\gamma$ solves the system \eqref{5.1}--\eqref{5.2}. By Proposition \ref{P4.1}, every solution to the system belongs to
${\mathcal A}(\gamma)$. Moreover, by Theorem \ref{Th4.1}, the system has a unique solution $\gamma\in{\mathcal A}_s(\gamma)$ up to the transform
\eqref{5.3}.

Without lost of generality we can assume the function $\gamma$ to have the zero mean value by an appropriate choice of the constant $c$
in \eqref{5.3}. We still have the freedom of multiplying $\gamma$ by a constant $\alpha$ satisfying $|\alpha|=1$.

We represent the function $\gamma$ by its Fourier series
\begin{equation}
\gamma(s)=\sum\limits_{n\in\Z}\widehat\gamma_n\,e^{ins}.
                                \label{5.4}
\end{equation}
Since $\gamma\in C^\infty({\mathbb S})$, the Fourier coefficients fast decay, i.e., for any $K>0$,
$|\widehat\gamma_n|=O((1+|n|)^{-K})$ as $|n|\to\infty$.
Since $\gamma$ has the zero mean value, $\widehat\gamma_0=0$. Without lost of generality we can assume that $\widehat\gamma_1\ge0$
by an appropriate choice of the constant $\alpha$ in \eqref{5.3}.

Let us write equations \eqref{5.1}--\eqref{5.2} in terms of Fourier coefficients $\widehat\gamma_n$. Differentiating \eqref{5.4}, we obtain
$$
\frac{d\gamma}{ds}=i\sum\limits_{n\in\Z}n\,\widehat\gamma_n\,e^{ins}.
$$
Substituting this expression into \eqref{5.1}, we see that
the equation \eqref{5.1} is equivalent to the system
\begin{equation}
\sum\limits_{n\in\Z}n^2|\widehat\gamma_n|^2=1,
                                \label{5.5}
\end{equation}
\begin{equation}
\sum\limits_{n\in\Z}n(n+k)\,\widehat\gamma_n\,\overline{\widehat\gamma_{n+k}}=0\quad(k=\pm1,\pm2,\dots).
                                \label{5.6}
\end{equation}

Let  $H_\gamma=(h_{mn})_{m,n=-\infty}^\infty$ be the matrix of the operator ${\mathcal H}_\gamma$ with respect to the trigonometric basis, i.e.,
$$
(\widehat{{\mathcal H}_\gamma f})_m=\sum\limits_{n\in\Z}h_{mn}\widehat f_n\quad\mbox{for}\quad f\in C^\infty({\mathbb S}).
$$
This is equivalent to
\begin{equation}
{\mathcal H}_\gamma\,e^{ims}=\sum\limits_{n\in\Z}h_{nm}\,e^{ins}\quad(m\in\Z).
                                \label{5.7}
\end{equation}
In particular,
$$
(\widehat{{\mathcal H}_\gamma \gamma})_m=\sum\limits_{n\in\Z}h_{mn}\widehat \gamma_n.
$$
Substituting this expression into \eqref{5.2}, we see that
the equation \eqref{5.2} is equivalent to the system
$$
\sum\limits_{n\in\Z}h_{mn}\widehat\gamma_n=\widehat\gamma_m\quad(m\in\Z).
$$

The matrix $H_\gamma=(h_{mn})_{m,n=-\infty}^\infty$ satisfies
$
h_{m0}=h_{0n}=0
$
since constant functions constitute the kernel of ${\mathcal H}_\gamma$ and its range coincides with $C^\infty_0({\mathbb S})$.

Let $(\lambda_{mn})_{m,n=-\infty}^\infty$ be the matrix of the operator $\Lambda_\gamma$ in the trigonometric basis, i.e.,
$$
(\widehat{\Lambda_\gamma f})_m=\sum\limits_{n\in\Z}\lambda_{mn}\widehat f_n\quad\mbox{for}\quad f\in C^\infty({\mathbb S}).
$$
The matrices $(\lambda_{mn})$ and $(h_{mn})$ are expressed through each other by the equality
\begin{equation}
\lambda_{mn}=mh_{mn}
                                \label{5.7a}
\end{equation}
which immediately follows from \eqref{3.8}.
Recall also that
\begin{equation}
\lambda_{mn}=\overline{\lambda_{nm}}
                                \label{5.7b}
\end{equation}
since $\Lambda_\gamma$ is a self-adjoint operator.

In the general case, $(\lambda_{mn})$ and $(h_{mn})$ are complex-valued matrices. But there is an important exception: both the matrices are
real if the curve $\Gamma$ is symmetric with respect to some line $L$ and the arc length $s$ is measured from an initial point belonging to
$\Gamma\cap L$. We omit the proof of this easy statement.

\medskip

Before going further in the general case, let us consider the simplest example when the domain $\Omega$ coincides with the unit disk.
In this case $\gamma:{\mathbb S}\to{\mathbb C}$ is the identical embedding, we write ${\mathcal H}$ and $H$ instead of ${\mathcal H}_\gamma$
and $H_\gamma$. The functions
$\varphi_n=e^{in s}\ (n=,0,\pm1,\pm2,\dots)$ are eigenvectors of ${\mathcal H}$ and $H$ is the diagonal matrix:
\begin{equation}
H=\mbox{diag}\,(\dots,-1,-1,0,1,1,\dots). \label{H_circle}
\end{equation}
Thus, in the simplest case of the unit disk, the space of solutions to the equation \eqref{5.2}  consists of functions
of the form
\begin{equation}
\gamma(s)=\sum\limits_{n=1}^\infty\widehat\gamma_n\,e^{in s}
                                \label{5.8}
\end{equation}
with arbitrary Fourier coefficients.

Let us now address the equation \eqref{5.1} in the simplest case. By \eqref{5.8}, $\widehat\gamma_n=0$ for $n\le0$ and equations
\eqref{5.5}--\eqref{5.6} take the form
\begin{equation}
\sum\limits_{n=1}^\infty n^2|\widehat\gamma_n|^2=1,
                                \label{5.9}
\end{equation}
\begin{equation}
\sum\limits_{n=1}^\infty n(n+k)\,\widehat\gamma_n\,\overline{\widehat\gamma_{n+k}}=0\quad(k\ge1).
                                \label{5.10}
\end{equation}
To describe all solutions to the system \eqref{5.9}--\eqref{5.10}, it suffices to find all non-zero solutions to the system \eqref{5.10} of
homogeneous equations and then to use \eqref{5.9} as a normalization condition.

The system \eqref{5.10} has the obvious solutions
\begin{equation}
    (\widehat\gamma_1,\widehat\gamma_2,\widehat\gamma_3,\widehat\gamma_4,\dots)=(\underbrace{0,\dots,0}_n,1,0,0,\dots)\quad(n=0,1,\dots).
\label{5.135}\end{equation}
They give the solutions
$
\gamma(s)=\frac{1}{n+1}\,e^{i(n+1)s}\quad(n=0,1,\dots)
$
of the system \eqref{5.1}--\eqref{5.2}. Such a solution belongs to ${\mathcal A}_s(\gamma)$ in the case of $n=0$ only.
Observe that $\widehat\gamma_1=0$ for such a solution if $n>0$. But the inequality $\widehat\gamma_1\neq0$ does not
guarantee that a solution of the system \eqref{5.1}--\eqref{5.2} belongs to ${\mathcal A}_s(\gamma)$, as the following example shows.

Fix $\zeta\in\mathbb C$ satisfying $0<|\zeta|<1$ and set
\begin{equation}
\gamma(s)=\frac{1}{|\zeta|}\Big(e^{is}+\frac{1-|\zeta|^2}{\bar\zeta}\,\ln(1-\bar\zeta\,e^{is})\Big).
                                \label{5.11}
\end{equation}
Fourier coefficients of this function are
$$
\widehat\gamma_n= \left\{ \begin{array}{ll}
0 &\mbox{for}\  n\leq 0,\\
|\zeta| &\mbox{for}\   n=1, \\
    -\frac{1}{n}|\zeta|(1-|\zeta|^2)\zeta^{-1}{\bar\zeta}^{n-2}&\mbox{for}\   n>1.
\end{array} \right.
$$
One easily shows that the function \eqref{5.11} solves equations \eqref{5.1}--\eqref{5.2}. The corresponding curve is drawn on Figure
\ref{fig5.1}
for three values of $\zeta$.

\begin{figure}[h]
\begin{minipage}[h]{0.3\linewidth}
\center{\includegraphics[width=0.7\linewidth]{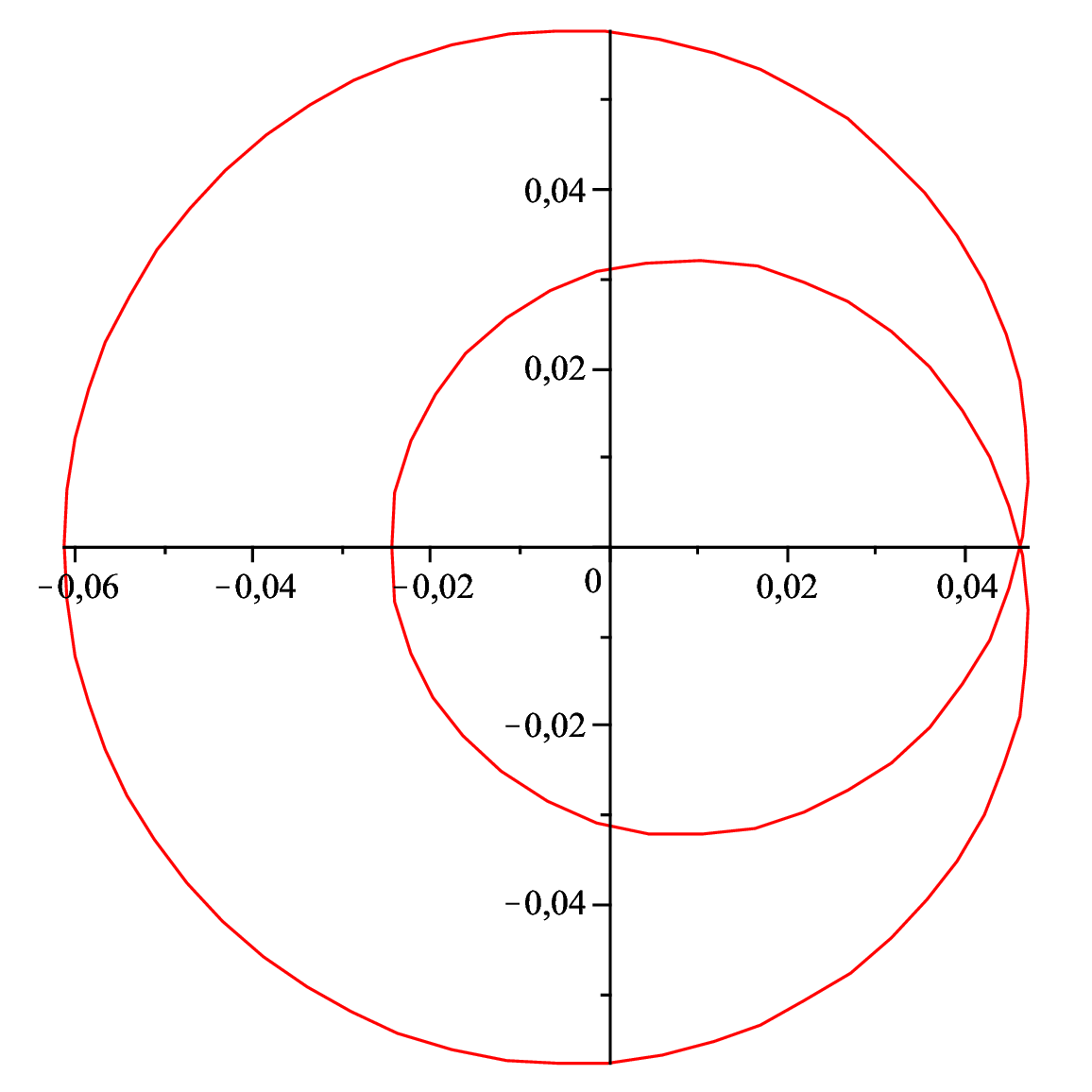}} \\
$\zeta=0.3$
\end{minipage}
\begin{minipage}[h]{0.3\linewidth}
\center{\includegraphics[width=0.7\linewidth]{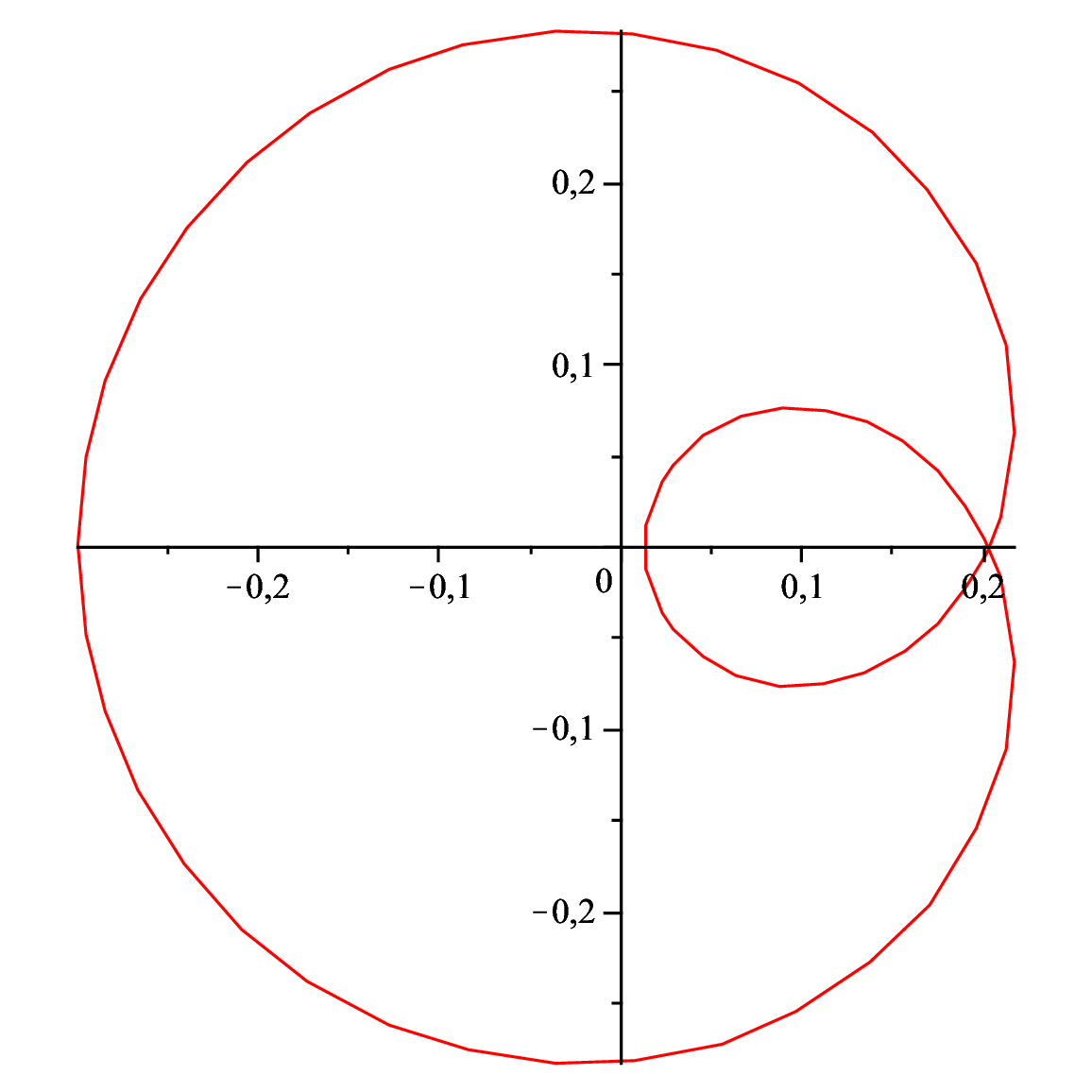}} \\
$\zeta=0.6$
\end{minipage}
\begin{minipage}[h]{0.3\linewidth}
\center{\includegraphics[width=0.7\linewidth]{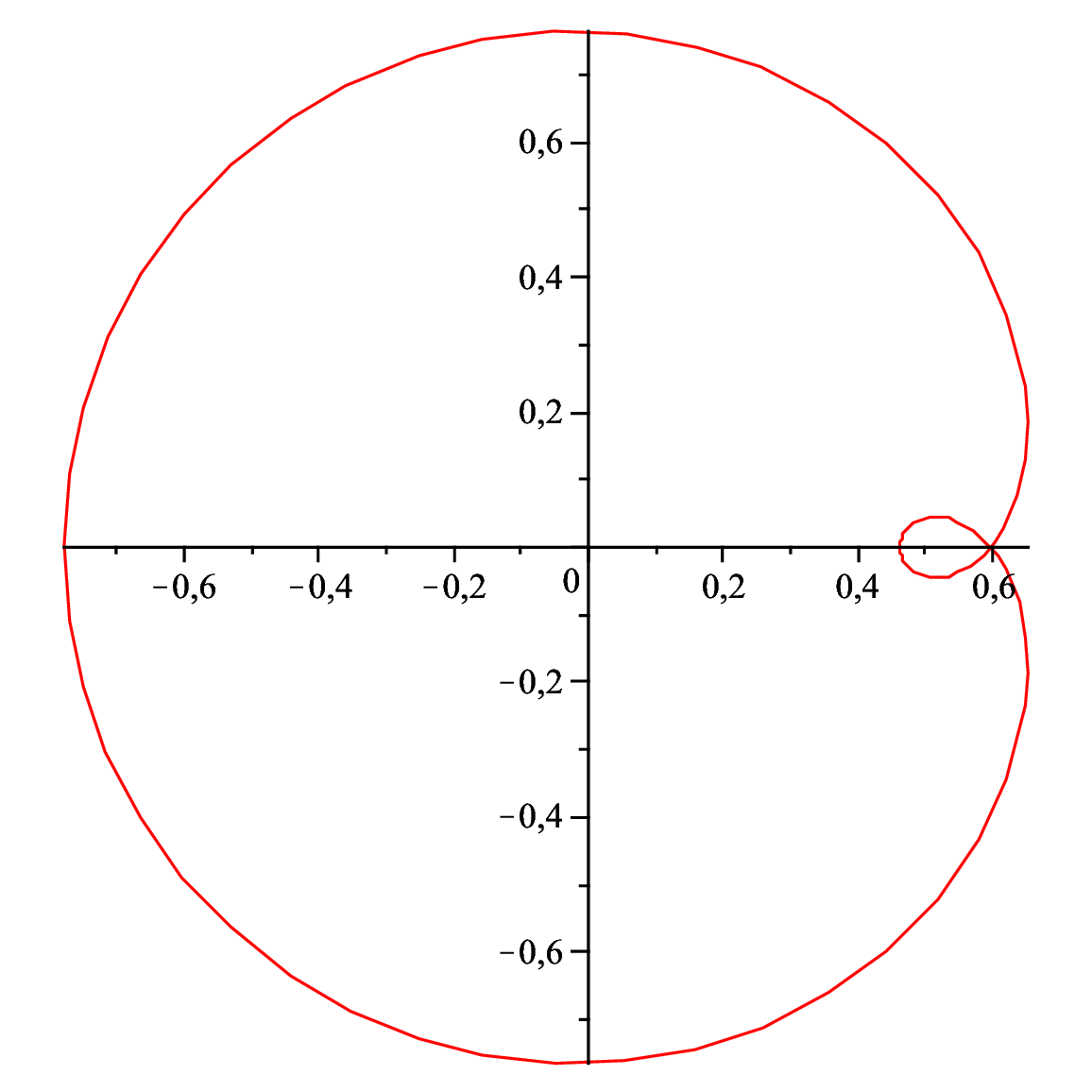}} \\
$\zeta=0.9$
\end{minipage}
\caption{self-intersecting solutions of \eqref{5.1}--\eqref{5.2}}
    \label{fig5.1}
\end{figure}

\medskip

Returning to the general case, we assume the operator ${\mathcal H}_\gamma$ to be represented by its matrix
$H_\gamma=(h_{nm})_{m,n\in{\Z}}$ in the trigonometric basis, see \eqref{5.7}. Doing numerics, we use the cut matrix
$H_\gamma^{(N)}=(h_{nm})_{m,n=-N}^N$ with a sufficiently large $N$.

Let $W$ is the maximal subspace of ${\mathbb C}^{2N+1}$ consisting of column-vectors
$$
\widehat\gamma=(\widehat\gamma_{-N},\dots,\widehat\gamma_{-1},0,\widehat\gamma_1,\dots,\widehat\gamma_N)^t
$$
satisfying
\begin{equation}
\|H_\gamma^{(N)}\widehat\gamma-\widehat\gamma\|<\varepsilon\|\widehat\gamma\|
                                \label{5.12}
\end{equation}
with a priory chosen small $\varepsilon>0$.
Here $\|x\|^2=\sum_{i=-N}^N|x_i|^2$ for $x\in{\mathbb C}^{2N+1}$.
The dimension $L=\mbox{dim}\,W$ is close to $N$ and $L=N$ if $N$ is sufficiently large and the value of $\varepsilon$ is appropriately chosen.
The space $W$ is considered as a finite-dimensional approximation of the space of solutions to the linear equation \eqref{5.2}.
Given the matrix $H_\gamma^{(N)}$, a basis of $W$ can be efficiently constructed.

As such a basis, it is convenient for us to take the set of vectors $\{e_1,... e_N\}\subset W$, where the positively-indexed coordinates of $e_k$ in $\mathbb{C}^{2N+1}$ are of the form
\begin{equation}
(0,\dots,0,1,0\dots 0), \ \text{1 is on the k-th position.}
\label{data}\end{equation}

Of course we replace the infinite system \eqref{5.5}--\eqref{5.6} with the finite system
\begin{equation}
\sum\limits_{m=-N}^Nm^2|\widehat\gamma_m|^2=1,
                                \label{5.13}
\end{equation}
\begin{equation}
\sum\limits_{m=-N}^N m(m+k)\widehat\gamma_m\overline{\widehat\gamma_{m+k}}=0\quad(k=1,2,\dots,(N-1))
                                \label{5.14}
\end{equation}
We look for solutions to the system \eqref{5.13}--\eqref{5.14} belonging to $W$
For the numerical solution of the system,
we use some iteration method (a version
of the classical gradient descent method). As in any iteration method, the choice of an initial approximation to the solution is of a
crucial importance.
Assuming the domain $\Omega$ to be sufficiently close to the unit disk, we choose the initial approximation
$\widehat\gamma{}^0\in W$  the vector $e_1$ of our basis.
%(the vector corresponds to the unit disk).
Applying our iteration algorithm, we obtain a sequence $\widehat\gamma{}^k\in W\ (k=0,1,\dots)$ converging to a solution of the system
\eqref{5.13}--\eqref{5.14}.
In most cases 10 iterations are enough to get a good approximation.

The method works successfully at least in examples presented in the next section.
Nevertheless, we emphasize that the system
\eqref{5.13}--\eqref{5.14} has many ``wrong solutions'' resulting functions $\gamma(s)$ which belong to ${\mathcal A}(\gamma)$ but do not
belong to ${\mathcal A}_s(\gamma)$. We got many such wrong solutions if either $\Omega$ was not close to the unit disk or
the initial approximation
$\widehat\gamma{}^0\in W$ was not sufficiently close to the vector $e_1$.
Three such ``wrong solutions'' are shown on Figures \ref{fig6.2} and \ref{fig6.4} below.

\section{Numerical examples}

In practical tomography, the matrix of the DtN-map is obtained by measurements. In our numerical experiments, we compute the matrix of the
operator $\Lambda_\Omega$ for a given compact domain $\Omega\subset{\R}^2$ bounded by a smooth curve $\Gamma$. This first part of the experiment
is referred to as {\it the forward problem}. We first shortly discuss the numerical solution of  the forward problem.

By the definition of the DtN-map, $\Lambda_\Omega f=\frac{\partial u}{\partial\nu}\big|_\Gamma$ for $f\in C^\infty(\Gamma)$, where $u$ is the
solution to the Dirichlet problem
\begin{equation}
\Delta u=0\ \mbox{in}\ \Omega,\quad u|_\Gamma=f.
                                \label{6.1}
\end{equation}

The most traditional method for numerical solution of the Dirichlet problem consists of replacing the differential equation $\Delta u=0$
with a system of difference equations on a grid adapted to the domain $\Omega$, and of solving the system of linear algebraic equations by some
iteration method. It is also possible to replace the Dirichlet problem with a second kind integral equation on the curve $\Gamma$ and to apply
well known methods of numerical solution of integral equations. In both approaches, we have to compute the derivative
$\frac{\partial u}{\partial\nu}\big|_\Gamma$. Numerical differentiation is rather unstable with respect to discretization errors. Thus,
the precise numerical solution of the forward problem is not easy.

In the case of a simply connected domain $\Omega$, the Dirichlet problem \eqref{6.1} is equivalent to the problem of constructing a biholomorhism
(i.e., one-to-one conformal map) $\Phi:{\D}\to\Omega$ ($\D$ is the unit disk).
Such a biholomorhism exists by the Riemann theorem.
Given $\Phi$, we define the diffeomorphism $\varphi=\Phi|_{\mathbb S}:{\mathbb S}=\partial{\D}\to\Gamma$ and define the function
$0<a\in C^\infty({\mathbb S})$ by
$
a(z)=\frac{1}{|\Phi'(z)|}\ (z\in{\mathbb S}),
$
where $\Phi'(z)$ is the complex derivative of the holomorphic function $\Phi(z)$. The operator
$\Lambda_\Omega$ is expressed through $\Lambda_{\D}$  by
\begin{equation}
\Lambda_\Omega=\varphi^{*-1}(a \Lambda_{\D})\varphi^*,
                              \label{6.2}
\end{equation}
where $a$ stands for the operator of multiplication by the function $a$. This relation follows immediately from the fact:
a conformal map transforms harmonic functions again to harmonic functions, see details in \cite{JS0}. The operator $\Lambda_{\D}$
is well known: $\Lambda_{\D}e^{in\theta}=|n|e^{in\theta}\ (n\in{\Z})$. Together with the latter equality, the formula \eqref{6.2}
allows us easily compute the matrix of $\Lambda_\Omega$. It is important to note that the problem of numerical differentiation (computation
of $\frac{\partial u}{\partial\nu}\big|_\Gamma$) is now replaces with computing the complex derivative $\Phi'(z)$. In particular, if $\Phi$
is given by an explicit analytic formula, the derivative $\Phi'(z)$ can be obtained analytically without any numerics.

In some partial cases, like in Example 1 below, the biholomorhism $\Phi:{\D}\to\Omega$ is found in Complex Analysis. But for a general
simply connected domain $\Omega$, the construction of the biholomorhism is the serious numerical problem actually equivalent to the Dirichlet
problem \eqref{6.1}.

In Examples 2--4 below, we avoid the numerical solution of the Dirichlet problem with the help of the following trick.

We start with choosing a holomorphic function $\widetilde\Phi(z)$ that is injective on $\D$, i.e.,
$\widetilde\Phi(z_1)=\widetilde\Phi(z_2)$ implies $z_1=z_2$ for $z_1,z_2\in\D$. Then
$\widetilde\Phi|_{\D}$ is a biholomorphism of the unit disk onto some domain. Then we normalize this biholomorphism by setting
$\Phi=c\widetilde\Phi$, where the constant $c>0$ is chosen so that the length of the boundary curve $\Gamma$ is equal to $2\pi$,
and set $\Omega=\Phi({\D})$. Since the functions $\Phi(z)$ and $\Phi'(z)$ are given, we compute the matrix of $\Lambda_\Omega$ %by \eqref{6.2}
without any numerics. In examples 2--4 below, $\Phi(z)$ is a low degree polynomial.

Finally we recall that matrices of the operators $\Lambda_\gamma$ and ${\mathcal H}_\gamma$ are related by \eqref{5.7a}. Just the matrix
$H^{(N)}_\gamma=(h_{mn})_{m,n=-N}^N$ of ${\mathcal H}_\gamma$ is used in our algorithm for solving the inverse problem.\vskip2mm

{\bf Example 1. Ellipse.}
Let $\Omega$ be the domain bounded by the ellipse
$
\frac{x^2}{a^2}+\frac{y^2}{b^2}=1.
$
Constants $0<b<a$ are chosen so that the length of the ellipse is equal to $2\pi$. We still have the free parameter $a/b$.
The biholomorphism of the unit disk onto the domain bounded by the ellipse is expressed in terms of second kind elliptic integrals
that are tabulated in many softwares.
This allows us to compute the matrix of the DtN-map with a good precision at least if $a/b$ is not large.

In the case of $a/b=13/12$, the central $3\times3$-part of the matrix $H^{(N)}_\gamma$ looks as follows with precision $10^{-2}$:
$$
\left(\begin{array}{ccc}
   h_{-1,-1} &  h_{-1,0} &  h_{-1,1}  \\
   h_{0,-1} & h_{0,0} & h_{0,1}   \\
   h_{1,-1} & h_{1,0} &  h_{1,1}
\end{array}\right)
=\left(\begin{array}{ccc}
   - 1 & 0 & 0.06  \\
  0 & 0 & 0   \\
    -0.06 & 0 &  1
\end{array}\right).
$$
Other elements coincide with corresponding elements of the matrix \eqref{H_circle} with the same precision.
Taking the symmetry \eqref{5.7b} into account, we see that the matrix differs from the case of the unit disk by one element $0.06$.
Nevertheless, the difference is sufficient for the good reconstruction of the ellipse.
Practically we used the matrix $H^{(10)}_\gamma$ with precision $10^{-5}$.

In the case of $a/b=3/2$, the matrix $H^{(3)}_\gamma$ is
$${\small{
\left(\begin{array}{rrrrrrr}
  - 1 & 0 & 0 & 0 &  0.01 & 0 &  0.01 \\
 0 & - 1 & 0 & 0 & 0 &  0.06 & 0 \\
 0& 0 & - 1.05 & 0 &  0.31 & 0 &  0.03 \\
 0 & 0 & 0 & 0 & 0 & 0 & 0 \\
-0.03 & 0 & -0.31 & 0 &  1.05 & 0 & 0 \\
 0 & -0.06 & 0 & 0 & 0 &  1 & 0 \\
-0.01 & 0 & -0.01 & 0 &  0 & 0 &  1
\end{array}\right).
}}$$
In both the cases, the reconstructed ellipse is visually indistinguishable from the original one. However, we are not able to compute the matrix of the DtN-map for $a/b>5$ with a sufficiently good precision
because of some difficulties concerning computations with second kind elliptic integrals. Th \href{https://drive.google.com/file/d/1yIKpn4lKWDaUO_JX6FMJpz31uNvgFRRK/view?usp=sharing}{animation for ellipse}, available at link, see also \cite{url}  shows how the algorithm's iterations first linger for a while on a false solution and then converge to the true one. The black line represents the velocity vector.

{\bf Example 2. Disk with two symmetric dents.}
The domain $\Omega$ is defined by
$\Omega=c\{1.5z+0.4z^3\mid |z|\le1\}$ with the constant $c$ chosen so that the length of the boundary curve is equal to $2\pi$.
In other words, $\widetilde\Phi(z)=1.5z+0.4z^3$ for the function mentioned at the beginning of the current section.
The original curve is drawn on the left picture of Figure \ref{fig6.1}. Two reconstructed curves for $N=5$ (red) and for $N=20$ (blue) are
presented on the middle picture.
Observe that the reconstructed curve has a shallower dent
than the original, making it appear longer. This is a reminiscent of the problem: how would a string wrapped around the Earth equator
rise if the string was extended by 10 cm?
The right picture of Figure \ref{fig6.1} shoes the velocity vector, i.e., the curve
$s\mapsto\frac{d\gamma(s)}{ds}$, where $s$ is the arc length.
The loops formed by the velocity vector correspond to zones where the curvature of $\gamma$ changes its sign.

 \begin{figure}[h]
 \begin{minipage}[h]{0.37\linewidth}
 \center\includegraphics[width=0.9\linewidth]{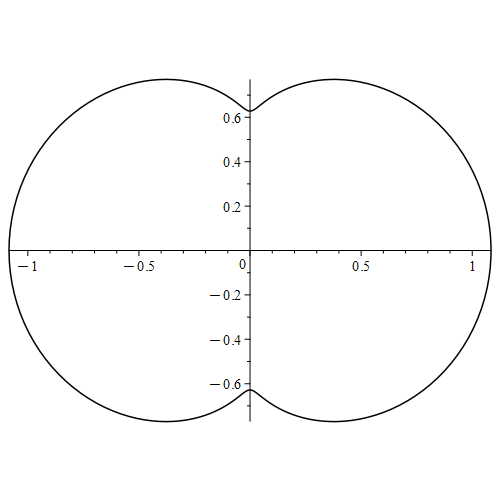}
 \end{minipage}
 %\begin{minipage}[h]{0.3\linewidth}
 %\center\includegraphics[width=0.7\linewidth]{iteration 1234.png} \\
 %\end{minipage}
 \begin{minipage}[h]{0.37\linewidth}
 \center\includegraphics[width=0.97\linewidth]{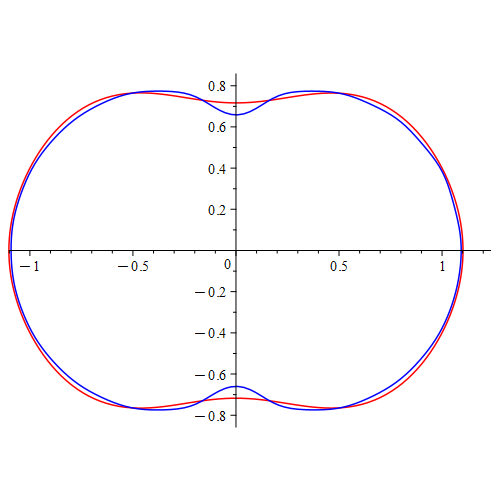}
 \end{minipage}
 \begin{minipage}[h]{0.23\linewidth}
 \center\includegraphics[width=0.9
 \linewidth]{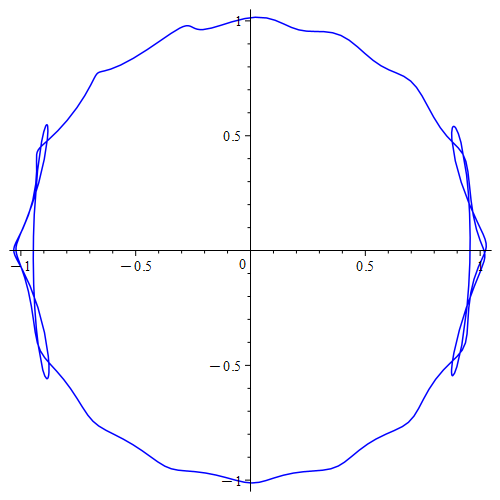}
 \end{minipage}
 \caption{Example 2: the original curve, two reconstructed curves for $N=5$ and for $N=20$, the velocity vector for $N=20$}
 \label{fig6.1}
 \end{figure}

{\bf Example 3. Disk with a dent}.
$\Omega=c\{(z+1.5)^2\mid |z|\le1\}$. Here we used the matrix $H^{(10)}_\gamma$. The reconstructed curve obtained after one(!) iteration of our
algorithm is shown on the left picture  of Figure \ref{fig6.2}, it is indistinguishable from the original curve at the scale of the picture.
Two ``wrong solutions'' mentioned at the end of the previous section are shown on two right
pictures of Figure \ref{fig6.2}.
\begin{figure}[h]
\begin{minipage}[h]{0.3\linewidth}
\center\includegraphics[width=0.9\linewidth]{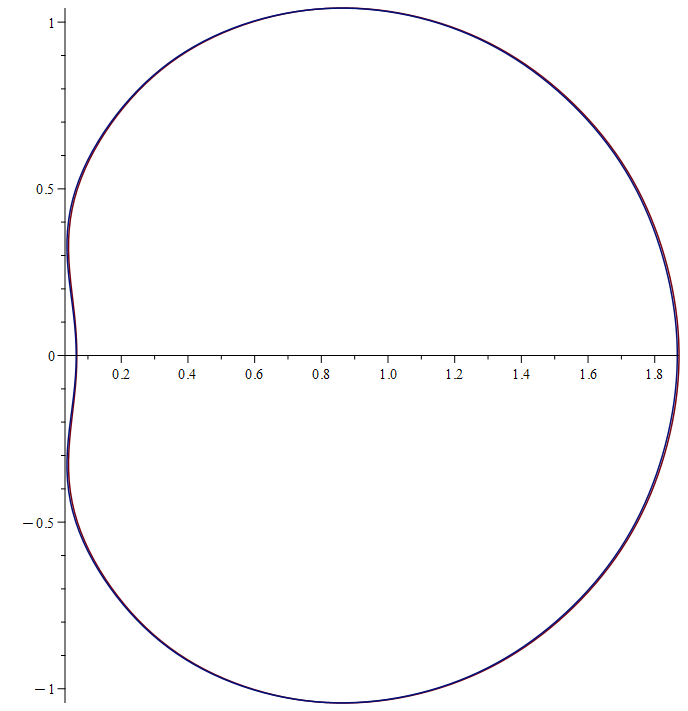} \\
\end{minipage}
\begin{minipage}[h]{0.3\linewidth}
\center\includegraphics[width=0.9\linewidth]{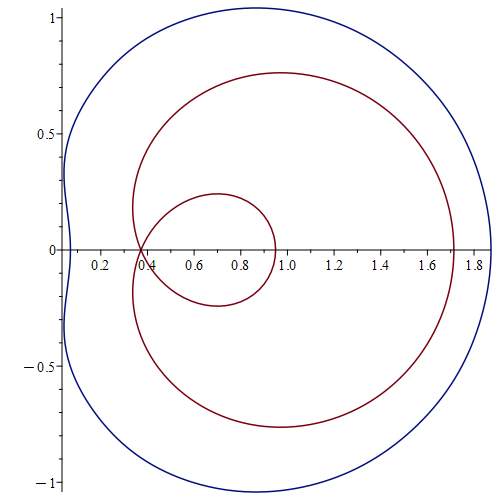} \\
\end{minipage}
\begin{minipage}[h]{0.3\linewidth}
\center\includegraphics[width=0.9\linewidth]{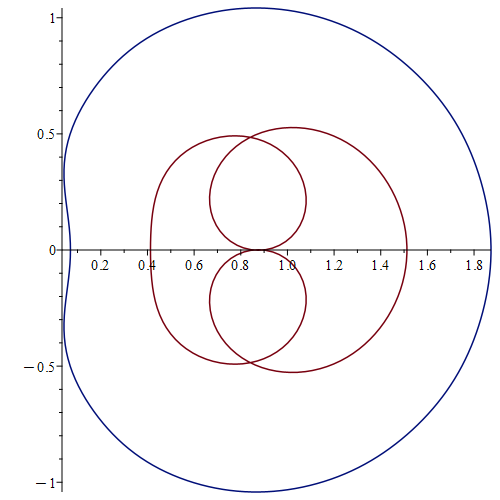} \\
\end{minipage}
\caption{Example 3: disk with a dent, the recovered and reconstructed curves, two ``wrong solutions''}
\label{fig6.2}
\end{figure}

%For clarity, we use this example to demonstrate how the successive approximations defined by our algorithm behave when the initial
%data are not too close to \eqref{data}. We chose a pair of initial data such that the algorithm's iterations "separate" them into
%two different solutions, see Fig.\ref{fig6.66}.    Initial approximation $e^W_1+e^W_2+1.62e^W_3$ corresponds to black curve
%("true solution"), $e^W_1+e^W_2+1.64e^W_3$ corresponds to red curve ("wrong solution"). See the notation at the end of section 5).
%Note in connection with Theorem 4 that the "wrong solutions" on the left side of Figures \ref{fig6.2} and \ref{fig6.66}
%have the same rotation number equal to $6\pi$.

Figure \ref{fig6.4} illustrates our iteration algorithm of numerical solution of the system \eqref{5.13}--\eqref{5.14} for Example 3.
The result depends on the choice of the initial approximation ${\widehat\gamma}^{0}\in W$.
Let $e_k$  be as in \eqref{data}.
Two slightly different initial approximations are drawn on the left picture:
${\widehat\gamma}^{0}=e_1+e_2+1.62e_3$ (black curve) and ${\widehat\gamma}^{0}=e_1+e_2+1.64e_3$ (red curve). Three last pictures of Figure \ref{fig6.4} show curves
obtained by three iterations of the algorithm. Black curves converge to the ``true solution'' while red curves converge to a ``wrong solution''
that belongs to ${\mathcal A}(\gamma)$ but does not belong to ${\mathcal A}_s(\gamma)$.

\begin{figure}[h]
\begin{minipage}[h]{0.24\linewidth}
%\center{\includegraphics[width=0.98\linewidth]{f1.png}}
%\end{minipage}
%\begin{minipage}[h]{0.2\linewidth}
\center{\includegraphics[width=0.98\linewidth]{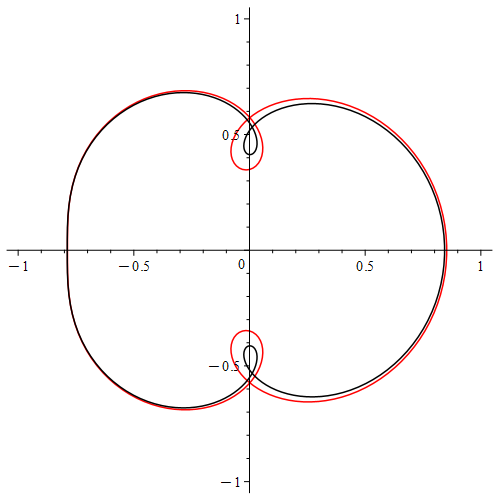}}
\end{minipage}
\begin{minipage}[h]{0.24\linewidth}
\center{\includegraphics[width=0.98\linewidth]{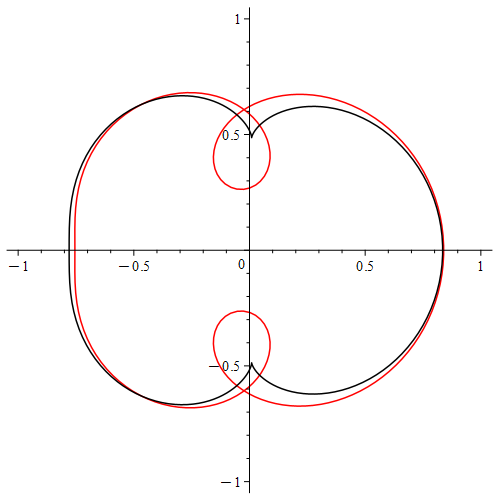}}
\end{minipage}
\begin{minipage}[h]{0.24\linewidth}
\center{\includegraphics[width=0.98\linewidth]{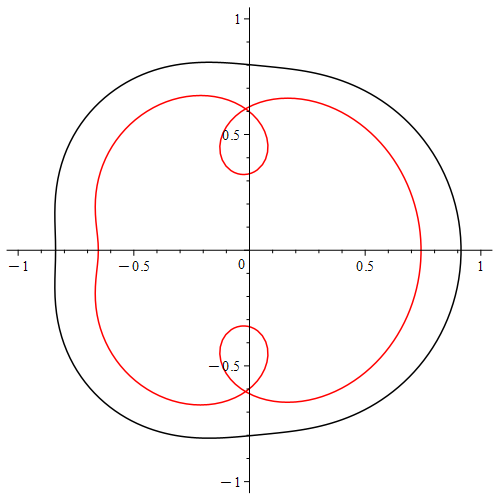}}
\end{minipage}
\begin{minipage}[h]{0.24\linewidth}
\center\includegraphics[width=0.98\linewidth]{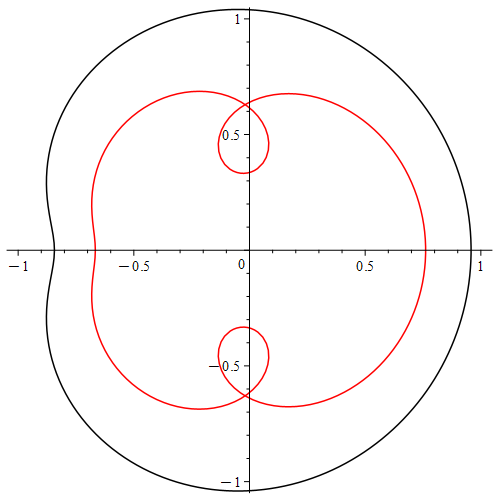}
\end{minipage}
\caption{Example 3: two initial approximations, curves obtained by three iterations}
    \label{fig6.4}
\end{figure}

%In this case, the matrix $H_\gamma$ will have complex coefficients.
%Figure 7 on the left shows the curve $c(6e^{it} + (0.5-0.7i)e^{2it} + e^{4it})$. In the middle part of Figure 7, the solid line shows the
%approximation of the Fourier series expansion for the natural parameter, $-10\leq n\leq 10$, and the dashed line shows the reconstructed
%curve for $N=10$. The right side of Figure 7 shows the velocity vector of the reconstructed curve. We see that it corresponds well
%to the naturalness of the parameter. Thus, we recognize not only the configuration pattern but also the phase pattern.
%Notice how the velocity vector makes loops in places where the curvature of the curve changes sign.
%%
%The animation (available in the electronic version of the journal) shows how similar initial data diverge to different solutions of
%the equation \eqref{60.17+0.6} during the iterations of the algorithm.

\begin{figure}[h]
\begin{minipage}[h]{0.3\linewidth}
\center{\includegraphics[width=0.98\linewidth]{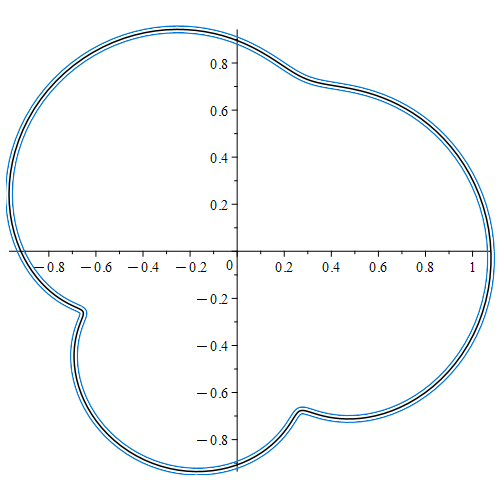}}
\end{minipage}
\begin{minipage}[h]{0.3\linewidth}
\center{\includegraphics[width=0.98\linewidth]{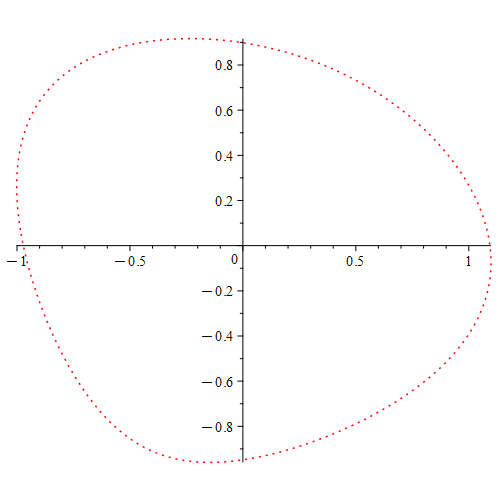}}
\end{minipage}
\begin{minipage}[h]{0.3\linewidth}
\center{\includegraphics[width=0.98\linewidth]{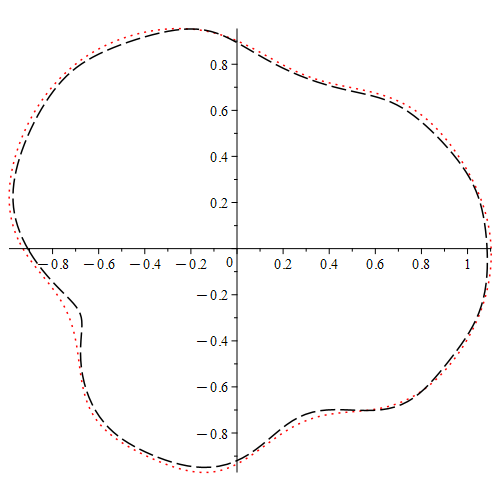}}
\end{minipage}
%\begin{minipage}[h]{0.3\linewidth}
%\center\includegraphics[width=0.98\linewidth]{asymm_diff.png}
%\end{minipage}
\caption{ Example 4: the original curve, the reconstructed curve for $N=3$, and for $N=10$}
    \label{fig6.5}
\end{figure}
%\begin{figure}[h]
%\begin{minipage}[h]{0.24\linewidth}
%\center{\includegraphics[width=0.98\linewidth]{15.png}}
%\end{minipage}
%\begin{minipage}[h]{0.24\linewidth}
%\center{\includegraphics[width=0.98\linewidth]{16.png}}
%\end{minipage}
%\begin{minipage}[h]{0.24\linewidth}
%\center{\includegraphics[width=0.98\linewidth]{17.png}}
%\end{minipage}
%\begin{minipage}[h]{0.24\linewidth}
%\center\includegraphics[width=0.98\linewidth]{18.png}
%\end{minipage}
%\begin{minipage}[h]{0.24\linewidth}
%   \center{\includegraphics[width=0.98\linewidth]{19.png}}
%\end{minipage}
%\begin{minipage}[h]{0.24\linewidth}
%\center{\includegraphics[width=0.98\linewidth]{20.png}}
%\end{minipage}
%\begin{minipage}[h]{0.24\linewidth}
%\center\includegraphics[width=0.98\linewidth]{21.png}
%\end{minipage}
%\begin{minipage}[h]{0.24\linewidth}
%\center\includegraphics[width=0.98\linewidth]{22.png}
%\end{minipage}
%\caption{ some of iterations of recovery: close initial data diverge }
%    \label{fig6.6}
%\end{figure}

{\bf Example 4. Asymmetric curve}. $\Omega=c\{6z+(0.5-0.7i)z^2+z^4\mid|z|\le1\}$.
Unlike previous examples, the curve $\Gamma=\partial\Omega$ has no symmetry axis, see the remark after \eqref{5.7b}.
Therefore $H^{(N)}_\gamma$ is a complex-valued matrix. In particular, $H^{(3)}_\gamma=A+iB$ where $A$ and $B$ are
$$\footnotesize{
\left(\begin{array}{rrrrrrr}
- 1.01 &  0.01 &  0.00  & 0 & - 0.01 & - 0.08  &  0.00\\
  0.01  & - 1.03  &  0.00 & 0 & 0.12  & -0.01  & -0.12\\
  0.01 &  0.00 & - 1.04  & 0 & 0.08 &  0.23  & -0.02\\
 0 & 0 & 0 & 0 & 0 & 0 & 0\\
  0.02 &  -0.23 &  -0.08  & 0 &  1.04 &  0.00 &  -0.01\\
 0.12 &  0.01& -0.12 & 0 &  0.00 &  1.03 & -0.01\\
  0.00 & 0.08 & 0.01  & 0 & 0.00  & 0.01 & 1.01
\end{array}\right),
\left(\begin{array}{rrrrrrr}
0.00 &  -0.01 & 0.00  & 0 &  0.04   &  -0.03   & 0.01\\
0.00 & 0.00  &  0.00 & 0 &  -0.05  &   0.09  &  -0.05\\
-0.01 &  0.00 & 0.00  & 0 &  0.07 & -0.10  & 0.12\\
 0 & 0 & 0 & 0 & 0 & 0 & 0\\
  0.12 & -0.10 &  0.07  & 0 &  0.00 &  0.00 &  -0.01\\
- 0.05  &  0.09 & - 0.06  & 0 &  0.00 &  0.00 & 0.02\\
0.01 & -0.03 & 0.04 & 0 & 0.00 & -0.01 & 0.00
\end{array}\right).}
$$

The original curve is drawn on the left part of Figure \ref{fig6.5}. The curve reconstructed from the matrix $H^{(3)}_\gamma$ is shown on
the middle part of Figure \ref{fig6.5}. There are two lines on the right part of Figure \ref{fig6.5}: the red dotted line  is the
curve reconstructed from the matrix $H^{(10)}_\gamma$; the black dash line represents the partial sum of the Fourier series for the original
curve
\begin{equation}
\gamma(s)\approx\sum\limits_{m=-10}^{10} \widehat\gamma_m\,e^{ims},
                              \label{6.3}
\end{equation}
where $s$ is the arc length (compare with \eqref{5.4}).

%Let's also present the matrix $H^*$ for Example 5. The last column was not included, but it can be reconstructed from the first column,
%since $h_{-i,-j}=-\overline{h_{i,j}}$. To get an idea of the reconstruction accuracy, we present the corresponding curve reconstructed
%from this matrix ($N=3$).Although the corresponding trigonometric polynomial is only of the third order, “flattening” at the site
%of future depressions is already visible

%\begin{figure}
%    \centering
%    \includegraphics[width=0.3
%    \linewidth]{little_asymm.png}
%    \caption{Example 5, recover with $N=3$}
%    \label{}
%   \end{figure}

\medskip

All presented examples confirm our main thesis: {\it The Dirichlet-to-Neumann map is very sensitive to small deviations of the shape of a
domain}.

\end{document}